\def\centerarc[#1](#2)(#3:#4:#5)
\definecolor{blue_links}{RGB}{13,0,180} 
\newtheorem{theorem}{Theorem}[section]
\newtheorem{lemma}[theorem]{Lemma}
\newtheorem{corollary}[theorem]{Corollary}
\newtheorem{remark}[theorem]{Remark}
\newtheorem*{theorem*}{Theorem}
\newcommand{\N}{\mathbb{N}}
\newcommand{\R}{\mathbb{R}}
\def\eps{\varepsilon}
\def\weakly{\rightharpoonup}
\def\Xint#1{\mathchoice
    {\XXint\displaystyle\textstyle{#1}}%
    {\XXint\textstyle\scriptstyle{#1}}%
    {\XXint\scriptstyle\scriptscriptstyle{#1}}%
    {\XXint\scriptscriptstyle\scriptscriptstyle{#1}}%
\!\int}
\def\XXint#1#2#3{{\setbox0=\hbox{$#1{#2#3}{\int}$}
\vcenter{\hbox{$#2#3$}}\kern-.5\wd0}}
\def\dashint{\Xint-}
\numberwithin{equation}{section}
\begin{document} 

\title[Compactness for $GSBV^p$ via concentration-compactness]{Compactness for $GSBV^p$ via concentration-compactness}
\author[W. Feldman]{William M Feldman} 
\address[William Feldman]{Department of Mathematics, University of Utah, Salt Lake City, USA}
\email{feldman@math.utah.edu}
\author[K. Stinson] {Kerrek Stinson} 
\address[Kerrek Stinson]{Department of Mathematics, University of Utah, Salt Lake City, USA}
\email{kerrek.stinson@utah.edu}

\subjclass[2010]{49J45, 70G75,   74B99, 74G65, 74R10}
\keywords{concentration-compactness, fracture, free discontinuity problem }

\begin{abstract}   
Motivated by variational models for fracture, we provide a new proof of compactness for $GSBV^p$ functions without a priori bounds on the function itself. Our proof is based on the classical idea of concentration-compactness, making it transparent in strategy and simple in implementation. Further, so far as we are aware, this is the first time the connection to concentration-compactness has been made explicit for problems in fracture mechanics.
\end{abstract}

\maketitle

\section{Introduction}

Variational models for fracture are based on the competition of two interconnected energies: a bulk elastic energy, due to stretching of the unbroken material, and a dissipation term, arising from the creation of a crack \cite{francfortMarigo98}. In antiplanar elasticity, the energy might take the form of 
\begin{equation}\label{eqn:MSnoFid}
E[u] := \int_{\Omega'} |\nabla u|^2 \, dx +\mathcal{H}^{N-1}(J_u),
\end{equation}
for a material body $\Omega' \subset \R^N$ with displacement $u\in GSBV^2(\Omega')$. 
 Critically, \eqref{eqn:MSnoFid} does not directly control the displacement, and sequences $u_n$ with uniformly bounded energy \eqref{eqn:MSnoFid} may have broken off pieces {that} `travel to infinity'; i.e., for $\Omega' = (-1,1)\times (0,1)$ the sequence $u_n : = n\chi_{(0,1)\times (0,1)}$ has bounded energy, and the piece $(0,1)\times (0,1)$ travels off to infinity.
 One way to think of this is that the energy associated to the current configuration $u(\Omega\setminus J_u)$ is invariant under {piecewise-constant translations} which preserve the shape of each broken piece. To cast this in a more mathematical framework, we can introduce a symmetry group of piecewise-constant translations: for any disjoint collection of sets $\mathcal{S} = \{S_j\}_{j=1}^\infty$ of $\Omega'$ with $\partial^*S_j\subset J_u$ and constants $a_j\in \R$,  the energy $E$ is invariant under the piecewise-constant translation $\sum_{j} a_j \chi_{S_j}$, in the sense that $E[u] = E[u + \sum_{j} a_j \chi_{S_j}]$.
 
 But this is perhaps the key point of Lions' concentration-compactness \cite{lions_CC_84}: to address a loss of compactness due to symmetry groups. While many of the prototypical applications consider invariance of energies due to domain translations (e.g., $\int_{\R^d} |\nabla u|^2 \, dx = \int_{\R^d} |\nabla u(x+a)|^2 \, dx$ for all $a\in \R^d$), we show that these classical ideas provide a fruitful way of looking at compactness for fracture problems where invariance is due to piecewise-constant translations of the current configuration.

 The purpose of this paper is to prove the following compactness theorem, which also accounts for imposed displacement boundary conditions via the function $h$. {See the end of this section for notation.}

\begin{theorem}\label{thm:main}
Let $1<p<\infty$, integers $d,N\geq 1$, and $\Omega\subset \Omega' \subset \R^N$ be Lipschitz domains.  Let $(u_n)_{n\in \N} \in GSBV^p({\Omega'}; \R^d)$ be such that 
\begin{equation}\label{eqn:energyBound}
\sup_{n\in \N}\left(\int_{{\Omega'}} |\nabla u_n|^p \, dx + \mathcal{H}^{N-1}(J_{u_n})\right)<\infty \quad \text{ and } \quad u_n = h \text{ on } \Omega'\setminus \Omega,
\end{equation}
where $h\in W^{1,p}(\Omega';\R^d).$
Then there exists a subsequence of $(u_n)_{n\in \N}$ (not relabeled), a collection of disjoint sets of finite perimeter $\mathcal{S}_n:=(S_j^n)_{j=0}^{\infty}$ contained in $\Omega$, vectors $(a_j^n)_{j=1}^\infty \R^d$, and a limit function $u\in GSBV^p(\Omega';\R^d)$ such that the following holds:
\begin{enumerate}
\item $u_n - \sum_{j =1}^\infty a_j^n \chi_{S_j^n} + (h-u_n)\chi_{S_0^n}\to u$ in measure in $\Omega'$,
\item $\nabla u_n \weakly \nabla u$ in $L^p(\Omega';\R^{d\times N})$,
\item $\mathcal{H}^{N-1} (J_u)\leq \liminf_{n\to \infty}\mathcal{H}^{N-1}(J_{u_n})$,
\item $\mathcal{H}^{N-1}(\cup_{j=0}^\infty \partial^* S_{j}^n \setminus J_{u_n})\to 0$ and $\mathcal{L}^{N}(S_0^n)\to 0$ as $n\to \infty$,
\item and $|a_j^n - a_i^n|\to \infty$ for $i\neq j$ as $n\to \infty.$
\end{enumerate}
\end{theorem}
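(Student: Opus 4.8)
The plan is to exploit the concentration-compactness alternative applied not to the functions $u_n$ themselves, but to the "broken pieces" of the current configuration $u_n(\Omega' \setminus J_{u_n})$, organized by their approximate values. The starting point is the standard $GSBV^p$ compactness theorem (Ambrosio's compactness) applied to a suitable \emph{truncation} or \emph{renormalization} of $u_n$; the issue is that without an $L^1$ bound we cannot directly extract a limit. The idea is to first use a coarea-type / Chebyshev argument to find good "level sets": since $\mathcal{H}^{N-1}(J_{u_n})$ is bounded, for a.e. threshold $t$ the set $\{|u_n| > t\}$ has boundary contained (up to small $\mathcal{H}^{N-1}$ error, controlled via the isoperimetric inequality against $\int|\nabla u_n|^p$) in $J_{u_n}$. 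This lets us decompose $\Omega$ into a bounded-displacement part and finitely many "large" pieces, each of which carries a nontrivial chunk of the jump set.

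Concretely, I would iterate the following dichotomy. Consider the measure $\mu_n = \mathcal{H}^{N-1} \mres J_{u_n}$; by boundedness, up to a subsequence $\mu_n \weaklystar \mu$ with $\mu(\Omega')< \infty$. Run Lions' concentration function argument on $\mu_n$ (or better, on a "jump-weighted" accounting of the pieces of $\Omega' \setminus J_{u_n}$): either the displacement stays tight (after subtracting a single constant on a single large piece, which becomes $S_0^n$ with the boundary condition $h$ built in — this handles the possibility that a big piece carrying the boundary data runs off), or it splits into at least two clusters of pieces separated by diverging vectors $a_j^n$. Each time a cluster splits off, it consumes a fixed positive amount of perimeter $\mathcal{H}^{N-1}(\partial^* S_j^n \cap J_{u_n}) \geq c > 0$ (otherwise by isoperimetry the piece would have small measure and could be absorbed into $S_0^n$), so the process terminates after finitely many steps at each "resolution scale," and a diagonal argument over scales produces the countable family $(S_j^n)$. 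On the complement of $\bigcup_j S_j^n$, the renormalized functions $v_n := u_n - \sum_j a_j^n \chi_{S_j^n} + (h-u_n)\chi_{S_0^n}$ are now bounded in $L^1$ with bounded $GSBV^p$ energy, so Ambrosio's theorem gives $v_n \to u$ in measure, $\nabla v_n \weakly \nabla u$ (and since $\nabla v_n = \nabla u_n$ off the null set $S_0^n$ and the $\chi_{S_j^n}$ have no absolutely continuous gradient, also $\nabla u_n \weakly \nabla u$), and the lower semicontinuity $\mathcal{H}^{N-1}(J_u) \leq \liminf \mathcal{H}^{N-1}(J_{v_n}) \leq \liminf \mathcal{H}^{N-1}(J_{u_n})$, establishing (1)--(3). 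Claim (4) follows because the "new" boundary $\bigcup_j \partial^* S_j^n \setminus J_{u_n}$ is exactly the isoperimetric error term, which we arranged to be summably small, and $\mathcal{L}^N(S_0^n) \to 0$ is forced by choosing $S_0^n$ to be the piece on which $v_n$ would otherwise be unbounded while its measure shrinks (or trivial); claim (5) is immediate from the construction of the clusters.

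The main obstacle I expect is \textbf{bookkeeping the countable family of pieces uniformly in $n$}: Lions' concentration-compactness is naturally a finite dichotomy at each step, but here the number of broken pieces can grow with $n$, and one must pass to a limit in which infinitely many pieces survive while the total perimeter stays bounded — this requires a careful diagonalization and a quantitative "each surviving piece eats $\geq c$ of perimeter, so only finitely many at each dyadic separation scale" estimate, made uniform so that the renormalization is well-defined. A secondary technical point is the interaction with the boundary condition: a large piece touching $\Omega' \setminus \Omega$ is pinned to $h$, so it cannot be translated by a constant $a_j^n$; this is precisely why $S_0^n$ is singled out with the correction $(h-u_n)\chi_{S_0^n}$ rather than a constant, and one must check that at most one such piece needs this treatment and that its Lebesgue measure can be driven to zero (otherwise it would contribute a genuine jump to the limit, which is fine, but then it should not be in $S_0^n$). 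Verifying that $\partial^* S_0^n$ also contributes negligibly to the spurious boundary in (4) closes the argument.
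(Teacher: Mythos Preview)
Your overall architecture is right---concentration-compactness, renormalize by piecewise constants, then invoke Ambrosio compactness on the bounded sequence---but the central object to which Lions' alternative is applied is misidentified, and this is a genuine gap rather than a detail. You propose applying concentration-compactness to $\mu_n = \mathcal{H}^{N-1}\mres J_{u_n}$ on the \emph{domain} $\Omega'$. Since $\Omega'$ is bounded, there is no translation group acting and no loss of compactness to detect; $\mu_n$ is automatically tight, and the dichotomy is vacuous. The paper's key move is instead to introduce a concentration function $f_n(t)$ on the \emph{range}: essentially $f_n(t) = \mathcal{H}^{N-1}(\partial^*\{u_n>t\}\setminus J_{u_n})$ plus a term counting traces $u_n^\pm$ near $t$ along $J_{u_n}\cup\partial\Omega'$. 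By coarea, $\int_{\R} f_n\,dt$ is controlled by the energy, and Lions' lemma applied to $f_n$ on $\R$ produces the bubbles $a_j^n\in\R$ with $|a_i^n-a_j^n|\to\infty$. The partition sets $P_j^n$ are then \emph{preimages} $\{|u_n-a_j^n|<R\}$, and the ``good level'' selection you mention is used to cut these with small new boundary. Your phrase ``jump-weighted accounting of the pieces'' gestures at this, but without the explicit range-side function the argument does not start.

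A second gap is the vanishing case. You assert that each surviving cluster eats $\geq c$ of perimeter, forcing finitely many at each scale. This is not how the argument runs and is not obviously true: the example in the paper (many thin strips with values $1,2,\dots,n$) shows that the leftover mass of $f_n$ after removing bubbles need not be small, yet corresponds to a region of \emph{vanishing volume} in $\Omega'$. Establishing this---that if $f_n(\cdot,\Omega_n)$ satisfies weak vanishing then $\mathcal{L}^N(\Omega_n)\to 0$---is a separate quantitative lemma (isoperimetric inequality applied to $\sim 1/\eps$ disjoint preimage slabs), and it is what justifies defining $S_0^n$ as the gap-plus-vanishing region with $\mathcal{L}^N(S_0^n)\to 0$. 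Finally, you have the role of $S_0^n$ backwards: it is not the ``big piece carrying the boundary data'' but precisely this small bad set where $u_n$ is overwritten by $h$; the piece containing $\Omega'\setminus\Omega$ is one of the $P_j^n$, pinned with $a_{j_0}^n=0$.
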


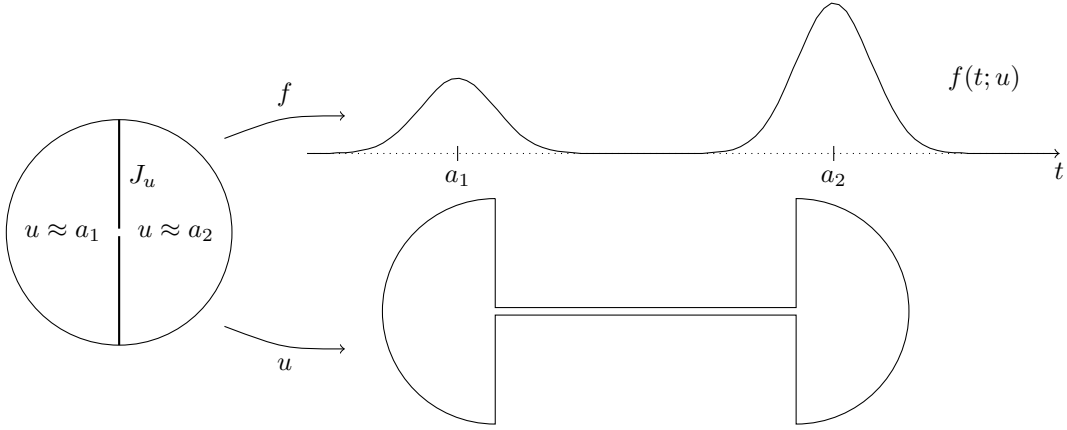
\begin{figure}
\begin{tikzpicture}
    \def\r{1.5}
    \def\gap{.05}

    \begin{scope}[shift={(-\r-5-1,0)}]
    
    \draw (0,0) circle (\r);

    \draw[thick] (0,\r) -- (0,\gap);
    \draw[thick] (0,-\r) -- (0,-\gap);
    \node at (-{.5*\r},0) {$u \approx a_1$};
    \node at ({.5*\r},0) {$u \approx a_2$};
    \node[right] at (0,{\r/2}) {$J_u$};

    \draw[->] (\r-.1,{\r/2+.3+.2}) .. controls (\r+.7,{\r/2+.3+.5})  .. (\r+1.5,{\r/2+.3+.5});
    \node[above] at (\r+.7,{\r/2+.3+.5}) {$f$};
    \draw[->] (\r-.1,{-\r/2-.3-.2}) .. controls (\r+.7,{-\r/2-.3-.5}) .. (\r+1.5,{-\r/2-.3-.5});
    \node[below] at (\r+.7,{-\r/2-.3-.5}) {$u$};
    \end{scope}

    \begin{scope}[shift={(0,{\r/2+.3})}]

    \draw[dotted,->] (-5,0) -- (5,0);
    \node[below] at (5,0) {$t$};

    \draw (-5.00,0.00)--(-4.90,0.00)--(-4.80,0.00)--(-4.70,0.00)--(-4.60,0.01)--(-4.49,0.01)--(-4.39,0.02)--(-4.29,0.04)--(-4.19,0.06)--(-4.09,0.09)--(-3.99,0.14)--(-3.89,0.21)--(-3.79,0.29)--(-3.69,0.39)--(-3.59,0.50)--(-3.48,0.62)--(-3.38,0.74)--(-3.28,0.85)--(-3.18,0.94)--(-3.08,0.99)--(-2.98,1.00)--(-2.88,0.97)--(-2.78,0.91)--(-2.68,0.81)--(-2.58,0.70)--(-2.47,0.58)--(-2.37,0.46)--(-2.27,0.35)--(-2.17,0.25)--(-2.07,0.18)--(-1.97,0.12)--(-1.87,0.08)--(-1.77,0.05)--(-1.67,0.03)--(-1.57,0.02)--(-1.46,0.01)--(-1.36,0.00)--(-1.26,0.00)--(-1.16,0.00)--(-1.06,0.00)--(-0.96,0.00)--(-0.86,0.00)--(-0.76,0.00)--(-0.66,0.00)--(-0.56,0.00)--(-0.45,0.00)--(-0.35,0.00)--(-0.25,0.00)--(-0.15,0.00)--(-0.05,0.00)--(0.05,0.00)--(0.15,0.00)--(0.25,0.00)--(0.35,0.01)--(0.45,0.02)--(0.56,0.03)--(0.66,0.05)--(0.76,0.09)--(0.86,0.15)--(0.96,0.23)--(1.06,0.34)--(1.16,0.49)--(1.26,0.67)--(1.36,0.89)--(1.46,1.13)--(1.57,1.37)--(1.67,1.60)--(1.77,1.80)--(1.87,1.93)--(1.97,2.00)--(2.07,1.98)--(2.17,1.89)--(2.27,1.72)--(2.37,1.51)--(2.47,1.27)--(2.58,1.03)--(2.68,0.80)--(2.78,0.60)--(2.88,0.43)--(2.98,0.29)--(3.08,0.19)--(3.18,0.12)--(3.28,0.07)--(3.38,0.04)--(3.48,0.02)--(3.59,0.01)--(3.69,0.01)--(3.79,0.00)--(3.89,0.00)--(3.99,0.00)--(4.09,0.00)--(4.19,0.00)--(4.29,0.00)--(4.39,0.00)--(4.49,0.00)--(4.60,0.00)--(4.70,0.00)--(4.80,0.00)--(4.90,0.00)--(5.00,0.00);
    \node at (4,1) {$f(t;u)$};

    \draw (-3,.1)--(-3,-.1);
    \node[below] at (-3,-.1) {$a_1$};

    \draw (2,.1)--(2,-.1);
    \node[below] at (2,-.1) {$a_2$};
    \end{scope}
    
    \begin{scope}[shift={(0,{-\r/2-.3})}]
        \draw  (-2.5,\r) arc (90:270:\r) -- (-2.5,-\gap) -- (1.5,-\gap) -- (1.5,-\r) arc (-90:90:\r) -- (1.5,\gap) -- (-2.5,\gap) -- (-2.5,\r);
    \end{scope}
\end{tikzpicture}
\caption{Left: The material domain with crack {$J_u$}. Above right: {The function} $f(t;u)$ from \eqref{eqn:fn-intro} measures concentrations in the range. Below right: The current configuration as deformed/displaced by $u(x)$.}
\label{fig:cc-function}
\end{figure}

Let us be a bit more specific {in explaining} the application of the concentration-compactness principle to this case. 
 For simplicity, we discuss the scalar case as in \eqref{eqn:MSnoFid}; in fact, we will later reduce the proof of Theorem \ref{thm:main} to the scalar setting. 
 For $u\in {GSBV^2(\Omega')}$, we consider {\emph{the concentration function}}
\begin{equation}\label{eqn:fn-intro}
f(t;u) : = \mathcal{H}^{N-1}(\partial^*\{u>t\}\setminus J_{u}) + \sum_{\pm} \mathcal{H}^{N-1}(\{t-1<u^{\pm}<t+1\}\cap (J_{u}\cup \partial \Omega')),
\end{equation}
where $u^{\pm}$ denotes the two possible trace values of $u$ on the rectifiable set $J_{u}\cup \partial \Omega'.$ The $L^1(\R)$ integral of $f(t;u)$ is controlled by the energy $E[u]$ via {the coarea} formula.  The idea is that ${f(t;u)}$ naturally captures all the values in the range where $u$ concentrates. The first term captures {the} displacement {by looking at} stretching in unbroken components and the second captures {the} displacement via breaking. See Figure~\ref{fig:cc-function}.  Lions' concentration{-}compactness compactness principle is naturally suited to sequences of functions with bounded ``mass", decomposing such sequences into countably many bubbles of ``mass" concentration and a ``vanishing" component, see Section~\ref{sec:CC} below. The bubbles picked by concentration{-}compactness yield the rigid translations $a_j$ in Theorem~\ref{thm:main}.  The ``vanishing" case is also quite interesting in this context,  see Remark~\ref{rmk:vanishExample} below. We show that the concentration{-}compactness ``vanishing" scenario can occur via many small fractures, and can carry a nontrivial energy, but must occur on a set of vanishing $N$-dimensional measure in the domain, see Lemma~\ref{lem:vanishing} below.

This kind of compactness result was first obtained by Friedrich in \cite{friedrich_GSBVp} using a piecewise Poincar\'e inequality along with a {technically sophisticated} argument to group elements of Caccioppoli partitions together.
In comparison, we find our strategy to be simple because we immediately generate the correct Caccioppoli partition via the concentration bubbles. Our proof does also rely on the coarea formula and, as such, cannot be immediately generalized to linearized elasticity. Nevertheless, Theorem \ref{thm:main} is applicable in the nonlinear setting of finite elasticity, and in particular a direct corollary of it gives existence of minimizers to the energy
\begin{equation}\label{eqn:nonlinEnergy}
\int_\Omega W(\nabla u)\, d x + \mathcal{H}^{N-1}(J_u) \quad \hbox{ with } \  u =h \text{ on }\Omega'\setminus \Omega, 
\end{equation}
where $W$ is the quasiconvexification of ${\rm dist}^2(\nabla u,SO(N)).$ Precisely, if one takes a minimizing sequence $u_n$ of the energy \eqref{eqn:nonlinEnergy}, Theorem \ref{thm:main} gives a new minimizing sequence $v_n : = u_n - \sum_{j =1}^\infty a_j^n \chi_{S_j^n} + (h-u_n)\chi_{S_0^n}$ converging in measure to a function $u\in GSBV^2(\Omega';\R^N)$, all still satisfying the boundary condition $v_n = h$ and $u = h$ on $\Omega'\setminus \Omega.$ The lower semi-continuity of the energy under convergence in measure (see \cite{ambrosioNewClass90,ambrosioLSCquasi94}) shows that $u$ is a minimizer of \eqref{eqn:nonlinEnergy}. 
We note that our {compactness result can} also account for heterogeneous bulk energies or (non-degenerate) cohesive surface energies, but do not spell out these details as this is already treated by Friedrich in \cite{friedrich_GSBVp} and might obfuscate the simplicity of our approach.

We remark that Dal Maso and Toader \cite{dalMasoToader_22} adapted Friedrich's strategy {to} the function space $GBV_*(\Omega')$ for the analysis of energies with plasticity, which in particular have degenerate cohesive surface energies. In \cite{donati_GBVstar_24}, Donati extended this to the vectorial setting of $GBV_*(\Omega';\R^d)$.
While highly non-trivial, in dimension $N=2,$ Friedrich's strategy does generalize to the case of linear elasticity, where the natural control is given in terms of the Griffith energy \cite{FriedrichSolombrino18}. Alternative approaches to compactness in linearized elasticity that work in any dimension were subsequently developed by Chambolle and Crismale \cite{chambolleCrismale18,chambolleCrismaleEquilibrium}  and Almi and Tasso \cite{AlmiTasso21}.  We remark that the techniques of \cite{chambolleCrismale18,chambolleCrismaleEquilibrium,AlmiTasso21} would also work in our setting, but our intention is to introduce a new approach, with compelling intuition, which may generate future insights and applications.

\noindent \textbf{Outline.} In Section \ref{sec:CC}, we prove a variant of the classical concentration-compactness theorem for infinitely many bubbles of concentration.  With this, in Section \ref{sec:mainProof}, we prove the compactness Theorem \ref{thm:main}.

\noindent \textbf{Notation.} Throughout we use $C>0$ as a generic constant, possibly changing from line to line; we make its dependencies explicit when necessary. The set $B(a,r)\subset \R^d$ is the open ball centered at $a\in \R^d$ with radius $r>0$. The set $\Omega'\subset \R^N$ refers to a bounded open set with Lipschitz boundary. For a set of finite perimeter $E\subset \Omega'$, we use $\partial^* E$ to denote the reduced boundary of $E$ in $\Omega'$. When we wish to refer to the reduced boundary of $E$ as a subset of $\R^N$, we write $\partial^*_{\R^N} E$. The function space $GSBV^p(\Omega';\R^d)$ consists of functions $u \in GSBV(\Omega';\R^d)$ such that
$\int_{\Omega'} |\nabla u|^p\, dx +\mathcal{H}^{N-1}(J_u) < +\infty$, where $GSBV(\Omega';\R^d)$ is as in \cite{AFP}, $\nabla u$ is the weak approximate gradient, and $J_u$ is the weak approximate jump-set. We emphasize that we always consider the jump-set $J_u$ as a subset of $\Omega'$. As usual, we write $GSBV^p(\Omega')$ for $GSBV^p(\Omega';\R)$.

\section{Mathematical preliminaries}\label{sec:CC}

The main tool for our proof is Lions' concentration-compactness theorem \cite{lions_CC_84}. 
\begin{theorem}\label{thm:CC}
Suppose $f_n : \R^d \to [0,\infty)$ and $\lambda>0$ with $\int_{\R^d} f_n \, dx \to \lambda$ as $n\to \infty.$ Then up to a subsequence (not relabeled) one of the three options holds:
\begin{enumerate}
\item \label{CC:compactness} \emph{(Compactness).} There are vectors $a^n_1\in \R^d$ such that for all $\eps>0$ there exists $R_\eps>0$ with $$\liminf_{n\to \infty} \int_{B(a^n_1,R_\eps)} f_n \, dx \geq \lambda - \eps.$$
\item \emph{(Vanishing).} For all $R>0$, $$\limsup_{n\to \infty }\left(\sup_{a\in \R^d}\int_{B(a,R)} f_n \, dx\right) = 0.$$
\item \emph{(Dichotomy).} There are $\lambda_1\in (0,\lambda)$, vectors $a^n_1\in \R^d$, and radii $0<R^n_1<\hat R^n_1$ with $\hat R^n_1 - R^n_1 \to \infty$ as $n\to \infty$ so that the functions $f_1^n : = f_n\chi_{B(a^n_1,R^1_n)} $ and $f^n_2:=f_n\chi_{B(a^n_1,\hat R^n_1)^c}$ satisfy
\begin{equation}\label{eqn:dichotomy}
\begin{aligned}
& \limsup_{n\to \infty}\left|\int_{\R^d} f_1^n\, dx - \lambda_1\right| = 0, \quad \limsup_{n\to \infty}\left|\int_{\R^d} f_2^n\, dx - (\lambda - \lambda_1)\right|= 0 , \\
& \quad\quad\quad\quad \text{and} \quad \limsup_{n\to \infty} \|f_n - (f_1^n+f_2^n)\|_{L^1(\R^d)} = 0. 
\end{aligned}
\end{equation}
Further $f_1^n$ satisfies the compactness of (\ref{CC:compactness}).
\end{enumerate}
\end{theorem}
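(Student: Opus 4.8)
The plan is to run Lions' original argument around the \emph{L\'evy concentration functions}
\[
Q_n(R):=\sup_{a\in\R^d}\int_{B(a,R)}f_n\,dx ,\qquad R\ge 0 .
\]
Each $Q_n$ is nondecreasing in $R$ and uniformly bounded (by $\sup_m\int f_m<\infty$), so by Helly's selection theorem we pass to a subsequence along which $Q_n\to Q$ pointwise on $[0,\infty)$, with $Q$ nondecreasing and $Q\le\lambda$. Set $\mu:=\lim_{R\to\infty}Q(R)\in[0,\lambda]$. The three alternatives are selected by the value of $\mu$: if $\mu=0$ then for every fixed $R$ one has $\limsup_n Q_n(R)=Q(R)=0$, which is exactly \emph{vanishing}; if $\mu=\lambda$ we produce \emph{compactness}; and if $0<\mu<\lambda$ we produce \emph{dichotomy}.

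\emph{Compactness ($\mu=\lambda$).} Fix $R_0$ with $Q(R_0)>\lambda/2$ and choose $a^n_1$ with $\int_{B(a^n_1,R_0)}f_n>Q_n(R_0)-\tfrac1n$. Given $\eps\in(0,\lambda)$, pick $R'\ge R_0$ with $Q(R')>\lambda-\tfrac\eps2$ and near-optimal centers $b_n$ at scale $R'$. If $B(a^n_1,R_0)$ and $B(b_n,R')$ were disjoint along a subsequence, then $\int_{B(a^n_1,R_0)}f_n+\int_{B(b_n,R')}f_n\le\int_{\R^d}f_n$ would force, in the limit, $\tfrac\lambda2+\lambda-\tfrac\eps2\le\lambda$, i.e.\ $\lambda\le\eps$, a contradiction. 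Hence the two balls meet for all large $n$, so $B(b_n,R')\subset B(a^n_1,R_0+2R')$ and
\[
\liminf_{n\to\infty}\int_{B(a^n_1,\,R_0+2R')}f_n\ \ge\ Q(R')\ >\ \lambda-\eps .
\]
Thus $R_\eps:=R_0+2R'$ works for the chosen $\eps$, and $(a^n_1)$ is the required sequence of centers.

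\emph{Dichotomy ($0<\mu<\lambda$).} Set $\lambda_1:=\mu\in(0,\lambda)$. For each $n$ choose a center $a^n_1$ near-optimal at a fixed scale $R_0$ with $Q(R_0)$ close to $\mu$, so $\int_{B(a^n_1,R_0)}f_n\to Q(R_0)$. Since $\limsup_n Q_n(R)=Q(R)\le\mu$ for each fixed $R$ while $Q(R)\uparrow\mu$, a diagonal choice yields radii $\rho_n\to\infty$ growing slowly enough that $\int_{B(a^n_1,\rho_n)}f_n\to\mu$; hence the annulus $B(a^n_1,\rho_n)\setminus B(a^n_1,R_0)$ has mass $o(1)$. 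Cutting this annulus into $k_n\to\infty$ dyadic shells and applying the pigeonhole principle to the outer half of them, we find one shell $B(a^n_1,\widehat R^n_1)\setminus B(a^n_1,R^n_1)$ with $R^n_1\to\infty$, $\widehat R^n_1-R^n_1\to\infty$, and vanishing mass. Put $f^n_1:=f_n\chi_{B(a^n_1,R^n_1)}$ and $f^n_2:=f_n\chi_{B(a^n_1,\widehat R^n_1)^c}$. Then $f_n-f^n_1-f^n_2=f_n\chi_{\text{shell}}$, so $\|f_n-f^n_1-f^n_2\|_{L^1}\to0$; $\int f^n_1\to\lambda_1$, being squeezed between $\int_{B(a^n_1,R_0)}f_n$ and $\int_{B(a^n_1,\rho_n)}f_n$ (both $\to\mu$ once the diagonal also drives $Q(R_0)\uparrow\mu$); and hence $\int f^n_2\to\lambda-\lambda_1$. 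Finally $(f^n_1)$ satisfies compactness: applying the analysis above to it, its concentration functions are $\le Q_n$ (as $0\le f^n_1\le f_n$), so its concentration level is $\le\mu$, while since $a^n_1$ near-maximizes $f_n$ at scales on which $Q$ approaches $\mu$ and $R^n_1\to\infty$, the bubble retains concentration level $\mu$; as $\mu=\lim_n\int f^n_1$, this is precisely the compactness alternative for $(f^n_1)$, whose center sequence is furnished by the compactness step.

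\emph{Main obstacle.} The compactness case is clean; all the delicate work is in the dichotomy case. The point is to coordinate, by a single diagonal extraction, several independent limits — the near-optimality errors $\tfrac1n$, the convergence $Q_n(\cdot)\to Q(\cdot)$, the choice of a scale on which $Q$ is within any prescribed tolerance of $\mu$, and the admissible growth rate of the outer radius — so that $\int f^n_1\to\mu$ exactly and the separating shell genuinely has vanishing mass and width $\to\infty$; and then to verify that the extracted bubble $f^n_1$ does not itself sub-split, i.e.\ that its concentration level equals its total mass. This is the step where I would be most careful.
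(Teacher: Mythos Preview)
The paper does not give its own proof of this theorem: it is stated as Lions' classical concentration--compactness lemma and cited directly to \cite{lions_CC_84}, then used as a black box in the proof of Corollary~\ref{cor:bubbleDecomp}. So there is no ``paper's proof'' to compare against; what you have written is precisely a compressed version of Lions' original argument via the L\'evy concentration functions $Q_n$, Helly's selection, and the trichotomy on $\mu=\lim_{R\to\infty}Q(R)$.

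Your argument is correct in outline, and the compactness and vanishing cases are clean. In the dichotomy case your sketch is right in spirit but loose in the bookkeeping. Two places deserve care. First, Helly gives $Q_n(R)\to Q(R)$ only at continuity points of $Q$; your claim ``$\limsup_n Q_n(R)=Q(R)$ for every $R$'' needs the obvious monotonicity sandwich. Second, the squeeze for $\int f^n_1\to\mu$ only closes once the diagonal simultaneously pushes the inner scale (your $R_0$) to values where $Q(R_0)\uparrow\mu$; as written, $a^n_1$ is chosen at a \emph{fixed} $R_0$ while you later say the diagonal ``also drives $Q(R_0)\uparrow\mu$'', so you are implicitly re-choosing $a^n_1$ along the diagonal. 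That is fine, but it should be made explicit: pick $R_k\uparrow\infty$ with $Q(R_k)\uparrow\mu$, then for $n$ in successive blocks choose $a^n_1$ near-optimal at scale $R_k$ and $\rho_n$ large but slow enough that $Q_n(\rho_n)<\mu+1/k$. With that single coordinated extraction, the shell/pigeonhole step and the verification that $f^n_1$ itself falls in the compactness alternative (its concentration level is $\le\mu$ since $f^n_1\le f_n$, and $\ge\mu$ since $a^n_1$ already captures mass arbitrarily close to $\mu$ at scales $\le R^n_1$) go through as you indicate. None of this is a genuine gap; it is exactly the ``main obstacle'' you flagged yourself.
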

We will actually rely on a version of this which accounts for multiple bubbles of concentration. The proof follows by repeatedly applying the Theorem \ref{thm:CC} to the leftover piece $f_2^n$ in the case of dichotomy.

\begin{corollary}\label{cor:bubbleDecomp}
Suppose $f_n : \R^d \to [0,\infty)$ and $\lambda>0$ with $\int_{\R^d} f_n \, dx \to \lambda$ as $n\to \infty.$ There is a subsequence of $f_n$ (not relabeled) along with positive numbers $(\lambda_{j})_{j=1}^{J^*}$, where $J^*\in \N\cup \{+\infty\}$, and vectors $a_j^n\in \R^d$, with $|a_j^n-a_i^n|\to \infty$ for $i \neq j$ as $n\to \infty,$ such that $\sum_j \lambda_j \leq \lambda$ and for any $\eps>0$, there is finite ${J} = J(\eps)\leq {J^*}$ and radius $R_\eps>0$ so that
 the remaining function ${f^{J}_n}:=(1-\sum_{j=1}^J\chi_{B(a_j^n,R_\eps)})f_n$ satisfies a weak form of vanishing:
\begin{equation}\label{eqn:weaklyVanishing}
\sup_{R>0}\limsup_{n\to \infty}\left(\sup_{a\in \R^d} \int_{B(a,R)}f^{J}_n\, dx\right)\leq \eps. 
\end{equation}
Furthermore, for any $\eps_0>0$ and $r>0$, there exists a radius $R_{\eps_0}$ (which can be taken larger than $R_\eps$) so that
\begin{equation}\label{eqn:cocnentrationBubbles}
\limsup_{n\to \infty} \left|\int_{B(a_j^n,R)} f_n \, dx - \lambda_j\right| < \eps_0 \quad \text{ for }j\leq J
\end{equation}
for $R \in [R_{\eps_0},R_{\eps_0}+r].$
\end{corollary}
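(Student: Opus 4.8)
The plan is to apply Theorem~\ref{thm:CC} iteratively: at each stage we peel one concentration bubble off the ``outer'' leftover and recurse on the rest, passing to a diagonal subsequence over the stages along which all the limits used below exist. Set $F^n_0:=f_n$, $m_0:=\lambda$. Given at stage $k\ge1$ a leftover $F^n_{k-1}\ge0$ with $\int_{\R^d}F^n_{k-1}\to m_{k-1}$, apply Theorem~\ref{thm:CC} to $F^n_{k-1}$, and in the dichotomy case take for $\lambda_k$ the largest admissible value, i.e.\ the concentration level $\lambda_k:=\sup_{R>0}\lim_n\sup_{a\in\R^d}\int_{B(a,R)}F^n_{k-1}$ (this is the quantity furnished by Lions' construction, and it is also the value separating the three alternatives). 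If \emph{vanishing} occurs, stop; if \emph{compactness} occurs, record a last bubble at $a^n_k$ with mass $\lambda_k:=m_{k-1}$ and stop; if \emph{dichotomy} occurs, obtain $\lambda_k\in(0,m_{k-1})$, a center $a^n_k$, and radii $R^n_k<\hat R^n_k$ with $\hat R^n_k-R^n_k\to\infty$, and write $F^n_{k-1}=\mathrm{core}^n_k+\mathrm{ann}^n_k+F^n_k$ with disjoint supports, where $\mathrm{core}^n_k:=F^n_{k-1}\chi_{B(a^n_k,R^n_k)}$, $\mathrm{ann}^n_k:=F^n_{k-1}\chi_{B(a^n_k,\hat R^n_k)\setminus B(a^n_k,R^n_k)}$, $F^n_k:=F^n_{k-1}\chi_{B(a^n_k,\hat R^n_k)^c}$; by \eqref{eqn:dichotomy}, $\int\mathrm{core}^n_k\to\lambda_k$, $\int\mathrm{ann}^n_k\to0$, $\int F^n_k\to m_k:=m_{k-1}-\lambda_k$, and $\mathrm{core}^n_k$ obeys the compactness alternative with mass $\lambda_k$. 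Unfolding the recursion gives, for every $J$ below the (possibly infinite) number of bubbles $J^*$, the pointwise disjoint decomposition $f_n=\sum_{j=1}^J\mathrm{core}^n_j+\sum_{j=1}^J\mathrm{ann}^n_j+F^n_J$.

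Next I would record two elementary facts. Since the partial sums are $\sum_{j\le k}\lambda_j=\lambda-m_k<\lambda$, we get $\sum_j\lambda_j\le\lambda$ and hence $\lambda_j\to0$ when $J^*=\infty$. And the centers separate: $\hat R^n_j\ge\hat R^n_j-R^n_j\to\infty$, while for $i>j$ the bubble $\mathrm{core}^n_i\le F^n_{i-1}$ is supported in $B(a^n_j,\hat R^n_j)^c$ but carries positive mass $\lambda_i$ which, by its compactness, lies up to any prescribed error in a fixed-radius ball about $a^n_i$; that fixed ball must then meet $B(a^n_j,\hat R^n_j)^c$, forcing $|a^n_i-a^n_j|\to\infty$.

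To get \eqref{eqn:weaklyVanishing}, fix $\eps>0$ and choose $J$ thus: if the iteration terminated, take $J:=J^*$ (then $f^J_n$ will be dominated by core-tails and annuli only); if $J^*=\infty$, take $J$ with $\lambda_{J+1}<\eps/2$, so that — since by the maximal choice $\lambda_{J+1}$ is the concentration level of $F^n_J$ — one has $\sup_{R>0}\limsup_n\sup_a\int_{B(a,R)}F^n_J\le\lambda_{J+1}<\eps/2$. In all cases, invoke the compactness of the finitely many $\mathrm{core}^n_j$, $j\le J$, to fix a single $R_\eps>0$ so large that $\limsup_n\int_{B(a^n_j,R_\eps)^c}\mathrm{core}^n_j\le\eps/(2J)$ for each $j\le J$. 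For $n$ large the balls $B(a^n_j,R_\eps)$ are disjoint, so $f^J_n=(1-\sum_{j\le J}\chi_{B(a^n_j,R_\eps)})f_n\le\sum_{j\le J}\mathrm{core}^n_j\chi_{B(a^n_j,R_\eps)^c}+\sum_{j\le J}\mathrm{ann}^n_j+F^n_J$, and estimating $\sup_a\int_{B(a,R)}$ of the right-hand side term by term yields $\sup_{R>0}\limsup_n\sup_a\int_{B(a,R)}f^J_n\le J\cdot\tfrac{\eps}{2J}+0+\tfrac{\eps}{2}=\eps$. For \eqref{eqn:cocnentrationBubbles}, given $\eps_0,r>0$ pick $\delta<\eps_0/(2J)$ and then $R_{\eps_0}\ge R_\eps$ exceeding the $\delta$-compactness radii of $\mathrm{core}^n_1,\dots,\mathrm{core}^n_J$; for $R\in[R_{\eps_0},R_{\eps_0}+r]$ and $n$ large, $B(a^n_j,R)$ captures all but $\le\delta$ of $\mathrm{core}^n_j$, is (by $B(a^n_j,R)\subset B(a^n_j,\hat R^n_j)$) disjoint from $\mathrm{core}^n_i,\mathrm{ann}^n_i$ for $i>j$ and from $F^n_J$, and (by the separation) meets $\mathrm{core}^n_i$ for $i<j$ only in mass $\le\delta$, while $\int\mathrm{ann}^n_i\to0$; hence $\limsup_n|\int_{B(a^n_j,R)}f_n-\lambda_j|\le J\delta<\eps_0$.

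The step I expect to be the main obstacle is the bookkeeping needed to replace the $n$-dependent radii $R^n_j,\hat R^n_j$ coming out of Theorem~\ref{thm:CC} by the single $n$-independent radius $R_\eps$ that the statement requires. This is absorbed by three features of the iteration: each extracted bubble $\mathrm{core}^n_j$ satisfies the \emph{compactness} alternative, so it genuinely concentrates at a radius independent of $n$; $\hat R^n_j\to\infty$, which at once separates the centers and renders all later cores, annuli and the final remainder invisible to a fixed ball around $a^n_j$; and the annular losses obey $\int\mathrm{ann}^n_j\to0$. The one place where the precise form of Lions' dichotomy is used is the maximal choice of $\lambda_k$, which is what forces $\lambda_{J+1}\to0$ in the non-terminating case and hence makes finitely many bubbles enough for \eqref{eqn:weaklyVanishing}.
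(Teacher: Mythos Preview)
Your proof is correct and follows essentially the same approach as the paper: iterate Theorem~\ref{thm:CC}, use the maximal choice of $\lambda_k$ (your concentration-level formula, the paper's \eqref{eqn:massViaMaximal}) to control the tail of the leftover $F^n_J$, and absorb the discrepancy between the $n$-dependent dichotomy radii and the fixed $R_\eps$ into core-tails and vanishing annuli. The paper organizes the final estimate as a contradiction argument where you give a direct bound, and it selects $J$ via $\sum_{j>J}\lambda_j<\eps/2$ rather than your $\lambda_{J+1}<\eps/2$, but these are cosmetic differences.
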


\begin{proof}
We proceed iteratively. We first apply Theorem \ref{thm:CC} to the sequence $f_n$. If we are in the case of compactness or vanishing, we are done. Supposing we are in the case of dichotomy, the relation \eqref{eqn:dichotomy} shows that $f_1^n$ has a `center of mass' $a_1^n\in \R^d$ with mass $\lambda_1 \in (0,\lambda)$. Applying Theorem \ref{thm:CC} to the remaining sequence $f_2^n$, we either conclude with compactness or vanishing, or we are in the case of dichotomy and are left with a remaining function $f^n_3$ (playing the role of $f^n_2$ before).
Supposing the case of dichotomy, we find another mass $\lambda_2 \in (0,\lambda - \lambda_1)$ and center of mass $a_2^n \in \R^d$.  Due to the fact that $\hat R^n_1 - R_1^n\to \infty$, we see that $|a_2^n - a_1^n|\to \infty$. Repeating this procedure, we find masses $\lambda_{j} \in  (0,\lambda - \sum_{k=1}^{j-1}\lambda_k]$, vectors $a_j^n$ satisfying the relation $|a_j^n - a_i^n| \to \infty$ as $n\to \infty$ for $i\neq j$,  and radii $R^n_j < \hat R^n_j$,  where $j$ runs from $1$ to $J^*$ (possibly $J^* = \infty$). 
In fact, Theorem \ref{thm:CC} always chooses the largest mass so that $\lambda_{j+1}\leq \lambda_j.$
Precisely, denoting by $f_{j+1}^n$ the function left over in the $j$th step in the case of dichotomy, we have
\begin{equation}\label{eqn:massViaMaximal}
\lambda_j  = \lim_{R\to \infty} \limsup_{n\to \infty}\left(\sup_{a\in \R^N} \int_{B(a,R)}f^n_j \, dx\right) 
\end{equation}
(this follows from the proof of Theorem \ref{thm:CC}, see \cite{lions_CC_84}).

We now prove \eqref{eqn:weaklyVanishing}. For $\eps>0$ fixed, choose finite $J\leq J^*$ so that $\sum_{j=J+1}^{J^{*}}\lambda_j <\eps/2.$ We then choose $R_\eps>0$ so that 
\begin{equation}\nonumber
\limsup_{n\to \infty} \left|\int_{B(a_j^n,R_\eps)} f_n \, dx - \lambda_j\right| < \frac{\eps}{2J} \quad \text{ for }j\leq J.
\end{equation}
By \eqref{eqn:dichotomy} this implies that
\begin{equation}\label{eqn:annuliExcess}
\limsup_{n\to \infty} \int_{B(a_j^n,\hat R_j^n) \setminus B(a_j^n,R_\eps)} f_n \, dx  < \frac{\eps}{2J} \quad \text{ for }j\leq J.
\end{equation}
We suppose by contradiction that there is $R>0$ such that 
\begin{equation}\label{eqn:contraHyp}
\limsup_{n\to \infty}\left(\sup_{a\in \R^d} \int_{B(a,R)}{f^{J}_n}\, dx\right) > \eps.
\end{equation}
Note that 
\begin{equation}\nonumber
{f^J_n}  \leq f^n_{J+1} + \sum_{j=1}^{J} \chi_{B(a_j^n,\hat R_j^n) \setminus B(a_j^n,R_\eps)}f_n,
\end{equation}
so we can estimate using \eqref{eqn:annuliExcess} and \eqref{eqn:contraHyp} that
\begin{equation}\nonumber
\eps < \limsup_{n\to \infty}\left(\sup_{a\in \R^d} \int_{B(a,R)} f^{n}_{J+1} \, dx\right) + \eps/2.
\end{equation}
This implies $\limsup_{n\to \infty}\left(\sup_{a\in \R^d} \int_{B(a,R)} f^{n}_{J+1} \, dx\right) >\eps/2$, which by  \eqref{eqn:massViaMaximal} implies that $\lambda_{J+1} > \eps/2$, a contradiction to the choice of $J.$

Finally, \eqref{eqn:cocnentrationBubbles} follows from the fact that the functions $f_n \chi_{B(a_j^n,R_j^n)}$ are concentrated about $a_j^n$ with $\int_{\R^d}f_n \chi_{B(a_j^n,R_j^n)}\, dx  \to \lambda_j$ as $n\to \infty.$
\end{proof}

\section{Proof for $GSBV^p$ compactness}\label{sec:mainProof}

Our proof of the compactness Theorem \ref{thm:main} proceeds by applying the concentration-compactness Corollary \ref{cor:bubbleDecomp} to well-chosen functions defined on the range of the displacements $u_n$. Taking the pre-image of these bubble sets, we will slice our reference configuration $\Omega'$ into regions that correspond to concentrated clusters of `material' in the range $\R^d$. For fixed $\eps>0$ coming from Corollary \ref{cor:bubbleDecomp}, we pass to the limit $n\to \infty$ for an $\eps$-dependent modification of $u_n$ to construct an intermediate function $u_\eps$ (that does not depend on $n$). A subsequent diagonalization as $\eps\to 0$ and, simultaneously, $J\to J^*$ concludes the theorem. Additionally, we will show that vanishing cannot occur unless it corresponds to a vanishingly small region in the domain $\Omega'$; see Remark \ref{rmk:vanishExample} and Lemma \ref{lem:vanishing}.

We first note that since $GSBV^p(\Omega';\R^d) = [GSBV^p(\Omega')]^d$ by \cite[Prop. 2.3]{DMFraToa_Preprint} (see also \cite{DMFraToa05}) it will suffice to prove Theorem \ref{thm:main} in the case that $d=1.$ The only conclusion of Theorem~\ref{thm:main} that is not immediately satisfied by ensuring the analogous result holds for the components is the lower semi-continuity of the jump-set. This is because $J_{u} = \cup_{i=1}^d J_{u_i}$, where $u_i$ refers to the $i$th component of $u$, and we must be careful not to double count the jump-set when adding up the result for the components. To avoid this, one can apply \cite[Lemma A.4]{friedrichSteinkeStinson} {(which is stated for $N = 2$, but holds for all $N\geq 1$)} to find disjoint open Lipschitz sets $\mathcal{U}_i$ such that
\begin{equation}\nonumber
\mathcal{H}^{N-1}(J_u) - \eta \leq \sum_{i=1}^d \mathcal{H}^{N-1}(J_{u_i} \cap \mathcal{U}_i).
\end{equation}
Consequently, it suffices to prove lower semi-continuity of the component jump-sets on each $\mathcal{U}_i$ and then take $\eta\to 0$. As our lower semi-continuity argument will apply to any open set contained in $\Omega'$, from now on, we consider scalar-valued $u_n$, i.e., $d=1$.

For each $u_n\in GSBV^p(\Omega')$, we 
introduce the associated concentration function
\begin{equation}\label{eqn:fn}
f_n(t) : =f_n(t,\Omega'):=\mathcal{H}^{N-1}(\partial^*\{u_n>t\}\setminus J_{u_n}) + \sum_{\pm} \mathcal{H}^{N-1}(\{t-1<u_n^{\pm}<t+1\}\cap (J_{u_n}\cup \partial \Omega')),
\end{equation}
where $u^{\pm}$ denotes the two possible trace values of $u$ on the rectifiable set $J_{u_n}\cup \partial \Omega'.$ 

\begin{remark}\label{rmk:fnSetoFinitePerim}
We note that $f_n(t,\Omega')$ can be naturally defined for any $\Omega\subset \Omega'$ that is a set of finite perimeter. Precisely, taking $\Omega$ to be the  $\mathcal{L}^{N}$-equivalent  set such that every point of $x\in \Omega$ has density $1$, we let 
\begin{equation}\nonumber
f_n(t,\Omega):=\mathcal{H}^{N-1}(\left(\partial^*\{u_n>t\}\setminus J_{u_n}\right)\cap \Omega) + \sum_{\pm} \mathcal{H}^{N-1}(\{t-1<(u_n|_{\Omega})^{\pm}<t+1\}\cap (J_{u_n}\cup \partial^*_{\R^N} \Omega)).
\end{equation}
In the second term, we abuse notation a bit to emphasize that we only count the trace of $u_n$ from inside $\Omega$. It follows that $f_n$ is entirely determined by the values of $u_n$ in $\Omega.$
\end{remark}

The function $f_n$ will provide us with a convenient way to break up the range of $u_n\in GSBV^p(\Omega')$. Before jumping into the proof of Theorem \ref{thm:main}, we give an example showing how concentration-compactness sees $u_n$ through the functions $f_n$.

\begin{remark}[{Example with vanishing}]\label{rmk:vanishExample}
Let $\Omega =\Omega'  = (-1,1)\times (0,1)$. We define the sequence of functions $u_n \in GSBV^2(\Omega')$ by 
\begin{equation}\nonumber
u_n(x ): = \begin{cases}
0 & \text{ if }x \in (-1,0)\times (0,1) \\
i & \text{ if }x \in (0,\frac{1}{n})\times (\frac{i-1}{n},\frac{i}{n}) \\
n+1 & \text{ if }x \in (\frac{1}{n},1)\times (0,1) \\
\end{cases}
\end{equation}
In this case, $\mathcal{H}^1(J_{u_n}) = 3 - 1/n$. If we apply Corollary \ref{cor:bubbleDecomp} to the function $f_n$ from \eqref{eqn:fn}, we will find that there are two bubbles of concentration centered at $a_1^n = 0$ and $a_2^n = n+1$. Now, if we excise bubbles at these centers of mass to define ${f_n^2} : = f_n (1-\chi_{B(a_1^n,1)}-\chi_{B(a_2^n,1)})$ {(using again the notation $f_n^J$ from Corollary~\ref{cor:bubbleDecomp} with $J=2$)}, we still find that
$$\int_{\R}{f_n^2} \, dx \geq \mathcal{H}^{1}( \{1 \leq  u^{\pm} \leq n\}\cap  {(J_{u_n}\cup \partial \Omega')}) \geq 3 - \frac{1}{n},$$
but simultaneously, for any fixed radius $R>0$ we have
$$\sup_{a\in \R}\int_{B(a,R)}{f_n^2} \, dx \leq  \frac{CR}{n}.$$
Thus, $u_n$ is an example of a finite energy sequence, as in \eqref{eqn:energyBound}, where vanishing occurs, as in \eqref{eqn:weaklyVanishing}, without the total energy of the remaining function $f_n^J$ also vanishing.
\end{remark}

We show that the above example is essentially the canonical example in the case of vanishing. Precisely, if $f_n$ vanishes on a given set, then the pre-image of that set under $u_n$ must have vanishing volume; in Remark \ref{rmk:vanishExample}, this is the set $(0,\frac{1}{n})\times (0,1)$. This idea is quantified in the following lemma, whose proof is deferred to after the proof of the main Theorem \ref{thm:main}.

\begin{lemma}\label{lem:vanishing}
Let the hypotheses of Theorem \ref{thm:main} hold with $d=1$. Additionally, let $\Omega_n\subset \Omega$ be sets of finite perimeter with uniformly bounded perimeter such that the sequence $f_n(t,\Omega_n)$, defined in Remark \ref{rmk:fnSetoFinitePerim}, is weakly vanishing in the sense that
\begin{equation}\label{eqn:weakVanLemma}
\sup_{R>0}\limsup_{n\to \infty}\left(\sup_{a\in \R} \int_{B(a,R)}f_n(t,\Omega_n)\, dt\right)\leq \eps
\end{equation}
for some $\eps>0.$
Then it follows that 
$$\limsup_{n\to \infty} \mathcal{L}^N(\Omega_n)\leq C\eps^{\frac{1}{N-1}} ,$$
where $C>0$ only depends on the uniform bound on the perimeters of $\Omega_n$ and the uniform energy bound from \eqref{eqn:energyBound}.
\end{lemma}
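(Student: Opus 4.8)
The plan is to relate the volume $\mathcal{L}^N(\Omega_n)$ to a layer-cake integral of level sets of $u_n$ restricted to $\Omega_n$, and to show that the weak vanishing hypothesis forces each level of $u_n$ to be ``small'' in a quantitative way, so that the total volume is small. Write $v_n := u_n|_{\Omega_n}$, which lies in $GSBV^p(\Omega_n)$ with energy controlled by the uniform bound in \eqref{eqn:energyBound} together with the perimeter bound on $\Omega_n$ (since $\partial^*_{\R^N}\Omega_n$ contributes to the traces). The first step is to run a ``slicing in the range'' argument: for each unit-length interval $I_k = [k, k+1)$, $k \in \Z$, the set $\{u_n \in I_k\} \cap \Omega_n$ is a set of finite perimeter in $\R^N$, and its perimeter inside $\Omega_n$ is bounded, up to a constant, by $f_n(k,\Omega_n)$ — indeed $\partial^*(\{u_n \in I_k\}\cap\Omega_n)$ either crosses the jump set (where the traces straddle a level near $k$, captured by the second term of $f_n$), or lies in the approximate-continuity region where it is contained in $\partial^*\{u_n > k\} \cup \partial^*\{u_n > k+1\}$, captured by the first term, or lies on $\partial^*_{\R^N}\Omega_n$. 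The upshot is $\mathcal{H}^{N-1}\big(\partial^*_{\R^N}(\{u_n \in I_k\}\cap\Omega_n)\big) \le C\big(f_n(k,\Omega_n) + f_n(k+1,\Omega_n) + \mathcal{H}^{N-1}(\partial^*_{\R^N}\Omega_n \cap \{u_n \in I_k\})\big)$.

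The second step is to quantify vanishing. Fix $R>0$ large. Hypothesis \eqref{eqn:weakVanLemma} gives, after passing to the limsup, $\sup_{a}\int_{B(a,R)} f_n(t,\Omega_n)\,dt \le 2\eps$ for $n$ large; summing over the $\sim 2R$ integer levels in any window of length $R$ and then over windows tiling $\R$, and using that $\int_\R f_n(t,\Omega_n)\,dt$ is bounded by the energy (coarea formula), we get that $\sum_k \min(f_n(k,\Omega_n)+f_n(k+1,\Omega_n), \text{something})$ behaves like the vanishing bound times the total mass; more precisely, for each individual $k$ we have $f_n(k,\Omega_n) \le 2\eps$. Combined with Step 1, each superlevel piece $P_k^n := \{u_n\in I_k\}\cap\Omega_n$ has relative perimeter $\mathcal{H}^{N-1}(\partial^*_{\R^N} P_k^n)\le C(\eps + \mathcal{H}^{N-1}(\partial^*_{\R^N}\Omega_n\cap\{u_n\in I_k\}))$. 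The isoperimetric inequality in $\R^N$ then gives $\mathcal{L}^N(P_k^n)^{(N-1)/N} \le C(\eps + \mathcal{H}^{N-1}(\partial^*_{\R^N}\Omega_n\cap\{u_n\in I_k\}))$, hence $\mathcal{L}^N(P_k^n) \le C(\eps + \mathcal{H}^{N-1}(\partial^*_{\R^N}\Omega_n\cap\{u_n\in I_k\}))^{N/(N-1)}$.

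The third step is to sum over $k$. The pieces $P_k^n$ are disjoint and their union is $\Omega_n$ (up to null sets), so $\mathcal{L}^N(\Omega_n) = \sum_k \mathcal{L}^N(P_k^n)$. One has to be careful: summing the bound from Step 2 directly would lose a factor because of the exponent $N/(N-1)>1$ on the $\eps$ term, and there could be infinitely many nonempty levels. To handle this I would split the levels into the ``few'' that carry most of the volume and the ``many small'' ones: since $N/(N-1)>1$, for the small levels we can bound $\mathcal{L}^N(P_k^n)^{N/(N-1)}$... — more cleanly, use that $\sum_k a_k^{N/(N-1)} \le (\sum_k a_k)(\sup_k a_k)^{1/(N-1)}$ with $a_k = C(\eps + \mathcal{H}^{N-1}(\partial^*_{\R^N}\Omega_n\cap\{u_n\in I_k\}))$; then $\sum_k a_k \le C(\eps\cdot\#\{\text{nonempty levels}\} + \mathcal{H}^{N-1}(\partial^*_{\R^N}\Omega_n))$ is not obviously finite, so instead I would bound $\sum_k \mathcal{L}^N(P_k^n) = \mathcal{L}^N(\Omega_n)$ on the left, write $\mathcal{L}^N(P_k^n) = \mathcal{L}^N(P_k^n)^{1/N}\cdot\mathcal{L}^N(P_k^n)^{(N-1)/N}$, apply Step 2 to the second factor, bound $\mathcal{L}^N(P_k^n)^{1/N} \le \mathcal{L}^N(\Omega_n)^{1/N}$, and then sum, getting $\mathcal{L}^N(\Omega_n) \le C\,\mathcal{L}^N(\Omega_n)^{1/N}\big(\sum_k \eps\mathbf{1}_{P_k^n\neq\emptyset}^{?} + \sum_k\mathcal{H}^{N-1}(\partial^*_{\R^N}\Omega_n\cap\{u_n\in I_k\})\big)$. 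The second sum is $\le \mathcal{H}^{N-1}(\partial^*_{\R^N}\Omega_n)$, bounded; for the first sum I replace the crude count of nonempty levels by noting that a level $k$ with $P_k^n\neq\emptyset$ has $f_n(k,\Omega_n) > 0$ and in fact that $\sum_k f_n(k,\Omega_n) \le \int_\R f_n \le C$, while $f_n(k,\Omega_n)\le 2\eps$ — this is exactly the structure that yields $\eps^{1/(N-1)}$: write $\eps = \eps^{1/(N-1)}\cdot\eps^{(N-2)/(N-1)}$ and use $\eps^{(N-2)/(N-1)} \gtrsim (f_n(k,\Omega_n)/2)^{(N-2)/(N-1)}$ — actually it is cleanest to carry the exponent through from the start: from $\mathcal{L}^N(P_k^n)^{(N-1)/N}\le C(\eps + b_k^n)$ with $b_k^n := \mathcal{H}^{N-1}(\partial^*_{\R^N}\Omega_n\cap\{u_n\in I_k\})$ and also $\mathcal{L}^N(P_k^n)^{(N-1)/N} \le C(f_n(k,\Omega_n)+f_n(k+1,\Omega_n)+b_k^n)$, interpolate these two bounds as $x \le x^{1/(N-1)}\cdot x^{(N-2)/(N-1)}$ to get $\mathcal{L}^N(P_k^n) \le C\eps^{1/(N-1)}(f_n(k,\Omega_n)+f_n(k+1,\Omega_n)+b_k^n)^{\,?}$, and then sum using $\sum_k(f_n(k,\Omega_n)+f_n(k+1,\Omega_n)) \le 2\int_\R f_n \le C$ and $\sum_k b_k^n \le \mathcal{H}^{N-1}(\partial^*_{\R^N}\Omega_n)\le C$. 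Taking $\limsup_{n\to\infty}$ then gives $\limsup_n \mathcal{L}^N(\Omega_n)\le C\eps^{1/(N-1)}$ with $C$ depending only on the energy bound and the perimeter bound, as claimed. The main obstacle, and the step requiring the most care, is precisely this bookkeeping in Step 3 — getting the exponent $\tfrac{1}{N-1}$ on $\eps$ rather than something weaker, which forces the interpolation between the ``isoperimetric + vanishing'' bound and the ``isoperimetric + total mass'' bound at the level of each individual range-layer, and handling the (a priori infinitely many) nonempty layers via the summability of $f_n$ coming from the coarea formula.
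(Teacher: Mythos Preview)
Your overall strategy is sound and genuinely different from the paper's. The paper does \emph{not} slice the range into unit intervals; instead it chooses $\alpha\in\N$ division points $t_1^n<\dots<t_{\alpha-1}^n$ so that the resulting pieces have \emph{equal volume} $m_n/\alpha$, applies the isoperimetric inequality to each, sums to obtain
\[
\mathcal H^{N-1}(J_{u_n}\cup\partial^*_{\R^N}\Omega_n)\;\ge\;\tfrac{1}{C}\,m_n^{(N-1)/N}\alpha^{1/N}-C\alpha\eps,
\]
and then optimizes by taking $\alpha=\lceil 1/\eps\rceil$. Your approach (infinitely many unit-range pieces, then the interpolation $\mathcal L^N(P_k)\le C p_k^{N/(N-1)}=C p_k\cdot p_k^{1/(N-1)}\le C\eps^{1/(N-1)}p_k$ followed by summing $\sum_k p_k\le C$) also reaches the correct exponent and avoids the optimization step; the paper's equal-volume trick, on the other hand, avoids having to control an infinite sum of perimeters.

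There is, however, a genuine gap you must close. You repeatedly use pointwise values $f_n(k,\Omega_n)$ at integer $k$, asserting both that $f_n(k,\Omega_n)\le 2\eps$ and that $\sum_k f_n(k,\Omega_n)\le C\int_\R f_n$. Neither follows: the weak-vanishing hypothesis \eqref{eqn:weakVanLemma} controls \emph{integrals} $\int_{B(a,R)}f_n$, not pointwise values, and for the first term of $f_n$ (the level-set perimeter $t\mapsto\mathcal H^{N-1}(\partial^*\{u_n>t\}\setminus J_{u_n})$) there is no reason for a Riemann sum at integers to be dominated by the integral. Both issues are fixed by the same averaging argument the paper uses (see the choice of $R_j^n$ leading to \eqref{eqn:ineq1}--\eqref{eqn:gapLength}, and to \eqref{eqn:gapVolume2} in the proof of the lemma): in each unit interval $[k,k+1)$ pick $t_k$ with $f_n(t_k,\Omega_n)\le\int_k^{k+1}f_n(\cdot,\Omega_n)\le 2\eps$ and such that the superlevel set is of finite perimeter. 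With these $t_k$ in place of the integers, your Step~1 bound on $\mathcal H^{N-1}(\partial^*_{\R^N}P_k^n)$ is correct, each perimeter is $\le C\eps$, and the sum $\sum_k\mathcal H^{N-1}(\partial^*_{\R^N}P_k^n)$ is bounded (level-set contributions sum to $\le\|\nabla u_n\|_{L^1}$, jump and $\partial^*\Omega_n$ contributions are counted at most twice). Then your interpolation gives $m_n\le C\eps^{1/(N-1)}$ directly. I would also recommend rewriting Step~3: the stream of abandoned attempts obscures the one clean argument (the interpolation just described) that actually works.
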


With these preliminaries in place, we can now prove the main compactness theorem.

\begin{proof}[Proof of Theorem \ref{thm:main}]
As discussed, we may assume $d=1$. Furthermore, if the theorem holds for $v_n := u_n - h$, which has $v_n = 0$ on $\Omega'\setminus \Omega$, with limit $v \in GSBV^p(\Omega')$, sets $\mathcal{S}^n$, and constants $a_j^n\in \R$, the theorem holds for the sequence $u_n$ with limit $u:= v+h$ and the same collection of sets and constants. Thus, we may assume that $h = 0.$

Let $f_n$ be defined as in \eqref{eqn:fn}. Since $$\mathcal{H}^{N-1}(J_{u_n}\cup \partial \Omega')\leq \int_{\R}f_n \, dx \leq C\left(\|\nabla u_n\|_{L^p(\Omega)} + \mathcal{H}^{N-1}(J_{u_n}\cup \partial \Omega')\right)$$ by the coarea formula and the inequality
\begin{equation}\label{eqn:fnCrackRel}
\sum_{z\in a +\mathbb{Z}} \chi_{[z,z+1)}(t)\mathcal{H}^{N-1}(\{z\leq u^{\pm}<z+1\}\cap (J_{u_n}\cup \partial \Omega')) \leq \mathcal{H}^{N-1}(\{t-1<u^{\pm}<t+1\}\cap (J_{u_n}\cup \partial \Omega')),
\end{equation}
which holds for any $a\in \R$, we may assume up to a subsequence (not relabeled) that there is $\lambda>0$ such that $\int_{\R}f_n\, dx \to \lambda$. We apply Corollary \ref{cor:bubbleDecomp} to the sequence $f_n$ to find $J^*\in \N\cup \{+\infty\}$ and constants $a_j^n \in \R$, such that for $0<\eps<1$ fixed there is finite ${J} = J(\eps)\leq J_*$ and $R_{\eps}>0$ satisfying \eqref{eqn:weaklyVanishing};  for $r = 2$, we also set $\eps_0 : = \eps/J$ to find $R_{\eps_0}$.

\textit{Step 1 (Construction of partition).}
Fix $0<\eps<1$. For notational convenience within this step, suppose $a_j^n < a_{j+1}^n$ for $j = 1,\ldots, {J}$. We partition the domain $\Omega'$ into a variety of ($\eps$-dependent) sets. See Figure~\ref{fig:partition} for a depiction of the below construction in the range. Precisely, for $R_j^n \in [R_{\eps_0},R_{\eps_0} +1)$, we introduce the \emph{main partition elements}
\begin{equation}\label{eqn:Pj}
P_j^n : = \{x\in \Omega' : -R_j^n \leq u_n(x) - a_j^n< R_j^n\}.
\end{equation}
We also introduce the \emph{gap sets} given by
\begin{equation}\label{eqn:Gjpm}
\begin{aligned}
G^n_{j,+} & : = \{x\in \Omega': R_j^n \leq u_n(x) - a_j^n< R_j^n +1 \}, \\
G^n_{j,-}  & : = \{x\in \Omega': -R_j^n - 1 \leq u_n(x) - a_j^n< -R_j^n \}.
\end{aligned}
\end{equation}
Finally we introduce the \emph{vanishing sets}
\begin{equation}\label{eqn:Vj}
V_j^n : = \{x\in \Omega' : a_j^n + R_j^n + 1 \leq u_n(x) < a_{j+1}^n -R_{j+1}^n - 1\},
\end{equation}
where $j \in \{0,\ldots,J\}$ and, by an abuse of notation, we take $a_0^n = -\infty$ and $a_{J+1}^n = +\infty.$
Note that the above sets form a disjoint partition of $\Omega'$ (perhaps excluding a set of measure zero).

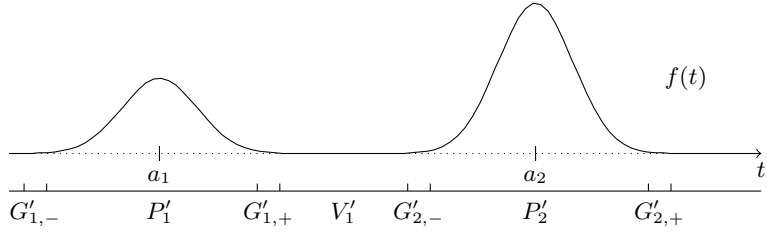
\begin{figure}
\begin{tikzpicture}


    \draw[dotted,->] (-5,0) -- (5,0);
    \node[below] at (5,0) {\small $t$};

    \draw (-5.00,0.00)--(-4.90,0.00)--(-4.80,0.00)--(-4.70,0.00)--(-4.60,0.01)--(-4.49,0.01)--(-4.39,0.02)--(-4.29,0.04)--(-4.19,0.06)--(-4.09,0.09)--(-3.99,0.14)--(-3.89,0.21)--(-3.79,0.29)--(-3.69,0.39)--(-3.59,0.50)--(-3.48,0.62)--(-3.38,0.74)--(-3.28,0.85)--(-3.18,0.94)--(-3.08,0.99)--(-2.98,1.00)--(-2.88,0.97)--(-2.78,0.91)--(-2.68,0.81)--(-2.58,0.70)--(-2.47,0.58)--(-2.37,0.46)--(-2.27,0.35)--(-2.17,0.25)--(-2.07,0.18)--(-1.97,0.12)--(-1.87,0.08)--(-1.77,0.05)--(-1.67,0.03)--(-1.57,0.02)--(-1.46,0.01)--(-1.36,0.00)--(-1.26,0.00)--(-1.16,0.00)--(-1.06,0.00)--(-0.96,0.00)--(-0.86,0.00)--(-0.76,0.00)--(-0.66,0.00)--(-0.56,0.00)--(-0.45,0.00)--(-0.35,0.00)--(-0.25,0.00)--(-0.15,0.00)--(-0.05,0.00)--(0.05,0.00)--(0.15,0.00)--(0.25,0.00)--(0.35,0.01)--(0.45,0.02)--(0.56,0.03)--(0.66,0.05)--(0.76,0.09)--(0.86,0.15)--(0.96,0.23)--(1.06,0.34)--(1.16,0.49)--(1.26,0.67)--(1.36,0.89)--(1.46,1.13)--(1.57,1.37)--(1.67,1.60)--(1.77,1.80)--(1.87,1.93)--(1.97,2.00)--(2.07,1.98)--(2.17,1.89)--(2.27,1.72)--(2.37,1.51)--(2.47,1.27)--(2.58,1.03)--(2.68,0.80)--(2.78,0.60)--(2.88,0.43)--(2.98,0.29)--(3.08,0.19)--(3.18,0.12)--(3.28,0.07)--(3.38,0.04)--(3.48,0.02)--(3.59,0.01)--(3.69,0.01)--(3.79,0.00)--(3.89,0.00)--(3.99,0.00)--(4.09,0.00)--(4.19,0.00)--(4.29,0.00)--(4.39,0.00)--(4.49,0.00)--(4.60,0.00)--(4.70,0.00)--(4.80,0.00)--(4.90,0.00)--(5.00,0.00);
    \node at (4,1) {\small $f(t)$};

    \draw (-3,.1)--(-3,-.1);
    \node[below] at (-3,-.1) {\small $a_1$};

    \draw (2,.1)--(2,-.1);
    \node[below] at (2,-.1) {\small $a_2$};

    \begin{scope}[shift = {(0,-.5)}]
    \draw (-5,0) -- (5,0);
    
    \draw (-4.8,.1)--(-4.8,0);
    \draw (-4.5,.1)--(-4.5,0);
    \node[below] at (-4.65,0) {\small $G_{1,-}'$};
    \node[below] at (-3,0) {\small $P_1'$};
    \draw (-1.7,.1)--(-1.7,0);
    \draw (-1.4,.1)--(-1.4,0);
    \node[below] at (-1.55,0) {\small $G_{1,+}'$};
    \node[below] at (-.55,0) {\small $V_1'$};
    \draw (.3,.1)--(.3,0);
    \draw (.6,.1)--(.6,0);
    \node[below] at (.45,0) {\small $G_{2,-}'$};
    \node[below] at (2,0) {\small $P_2'$};
    \draw (3.5,.1)--(3.5,0);
    \draw (3.8,.1)--(3.8,0);
    \node[below] at (3.65,0) {\small $G_{2,+}'$};

    \end{scope}
\end{tikzpicture}
\caption{Partitioning the range via the concentration{-}compactness bubbles. In the figure sets $E'$ denote $u(E)$, i.e. $P_1' = u(P_1)$ etc.}
\label{fig:partition}
\end{figure}

We will choose $R_j^n \in [R_{\eps_0},R_{\eps_0} +1)$ such that the above sets form a Caccioppoli partition of $\Omega'$ and the gap sets have small boundary.
By \cite[Theorem 4.34]{AFP}, almost every choice of $R_j^n$ gives that the sets in \eqref{eqn:Pj}, \eqref{eqn:Gjpm}, and \eqref{eqn:Vj} are sets of finite perimeter. Since there are only finitely many sets, this implies that $P_j^n$, $G^n_{j,\pm}$, and $V_j^n$ form a Caccioppoli partition of $\Omega'$ for sufficiently large $n$.
 To control the boundary of the gap sets, we look at $G_{j,+}^n$.
Note from \eqref{eqn:cocnentrationBubbles}, we have that $\int_{a_{j}^n +R_{\eps_0}}^{a_{j}^n + R_{\eps_0} + 1} \left(f_n(t) + f_n(t+1)\right) \, dt \leq 2\eps_0$ for sufficiently large $n.$
 By an averaging argument applied to this inequality, we can additionally choose $R_j^n \in [R_{\eps_0},R_{\eps_0} +1)$ so that
\begin{equation}\label{eqn:ineq1}
\begin{aligned}
\mathcal{H}^{N-1}(\partial^*\{u_n>a_j^n + R_j^n\}\setminus J_{u_n}) + &\mathcal{H}^{N-1}(\partial^*\{u_n>a_j^n + R_j^n + 1\}\setminus J_{u_n}) \\
&\quad \leq f_n(a_j^n + R_j^n) + f_n(a_j^n + R_j^n+ 1) \leq C \eps_0,
\end{aligned}
\end{equation} 
for a constant $C>0$, and $\mathcal{L}^N(\{u_n = a_j^n + R_j^n\}) + \mathcal{L}^N(\{u_n = a_j^n + R_j^n+1\}) = 0$. Furthermore \eqref{eqn:fnCrackRel} with $a = a_j^n + R_{\eps_0} $ and smallness of the integral of $f_n$ over $[R_{\eps_0},R_{\eps_0}+2]$ implies that 
\begin{equation}\label{eqn:ineq2}
\mathcal{H}^{N-1}\left(\{ R_{\eps_0}\leq u^{\pm}- a_j^n<R_{\eps_0}+2\}\cap (J_{u_n}\cup \partial \Omega')\right)\leq 2\eps_0.
\end{equation}
Having chosen $R_j^n$ so that $G^{n}_{j,+}$ is a set of finite perimeter, we see 
\begin{equation}\nonumber
\begin{aligned}
\partial^*_{\R^N}G_{j,+}^n \subset & \bigcup_{\pm}\left(\{ R_{\eps_0}\leq u^{\pm}-a_j^n<R_{\eps_0}+2\}\cap (J_{u_n}\cup \partial \Omega')\right)  \\
& \quad \cup  \left(\partial^*\{u_n>a_j^n + R_j^n\}\setminus J_{u_n}\right) \cup \left(\partial^*\{u_n>a_j^n + R_j^n+1\}\setminus J_{u_n} \right),
\end{aligned}
\end{equation}
which shows by \eqref{eqn:ineq1} and \eqref{eqn:ineq2} that $\mathcal{H}^{N-1}(\partial^*_{\R^N}G_{j,+}^n) \leq C\eps_0$. Applying this reasoning for all the gap sets, taking the union over the sets, and recalling that $\eps_0 : = \eps/J$, we have 
\begin{equation}\label{eqn:gapLength}
\mathcal{H}^{N-1}\Big(\bigcup_{j,\pm}\partial^*_{\R^N}G_{j,\pm}^n\Big)\leq C\eps
\end{equation}
for a universal constant $C>0$. (We note, though we only implicitly use it for $G_{j,+}^n$ and $G_{j,-}^n$, that one could sum before doing the averaging argument so that $R_j^n = R_i^n$ for all $i$ and $j$.)
From the isoperimetric inequality in $\R^{N}$, we also find that 
\begin{equation}\label{eqn:gapVolume}
\mathcal{L}^N \Big(\bigcup_{j,\pm}G_{j,\pm}^n\Big) \leq C \eps^{\frac{N}{N-1}}.
\end{equation}
Inequality \eqref{eqn:gapLength} shows us that we can add the boundary of the gap sets to the crack while staying close to the original energy. And further \eqref{eqn:gapVolume} shows that the value of the deformation/displacement $u_n$ on this set will not affect the limit (once $\eps\to 0$). Consequently, we are free to prescribe $u_n$ on $\cup_{j,\pm}G_{j,\pm}^n$.

Our next objective is to control the measure of the vanishing region. Since $P_j^n$, $G_{j,\pm}^n$, and $V_{j}^n$ form a Caccioppoli partition of the domain, by \cite[Theorem 4.17]{AFP} it holds that
\begin{equation}\label{eqn:CaccProp}
\partial^* P_i^n \subset \Big(\bigcup_{j\neq i} \partial^* P_{j}^n\Big) \cup\Big(\bigcup_{j,\pm}\partial^* G_{j,\pm}^n\Big) \cup \Big(\bigcup_{j} \partial^* V_{j}^n\Big),
\end{equation}
with each point of $\partial^* P_i^n$ belonging to at most one set on the right-hand side, up to an $\mathcal{H}^{N-1}$-null set. Due to the gap sets, $\partial^* P_i^n \cap \partial^* P_{j}^n \subset J_{u_n}$ and $\partial^* P_i^n \cap \partial^* V_{j}^n \subset J_{u_n}$ ($i\neq j$ only in the first relation).
{We remark that these subset relations follow from the definition of the upper and lower limits used to define the jump-set of $GSBV$ functions, see \cite[Definition 4.28]{AFP}, and the fact that at every point $x\in \partial^*P_j^n$, $B(0,1)\cap \frac{1}{r}(P_j^n-x)$ blows up to a half ball as $r\to \infty$. For instance, if $x\in \partial^* P_i^n \cap \partial^* P_{j}^n$ with $ a_i^n<a_j^n$, then (outside of an $\mathcal{H}^{N-1}$-null set for $x$)
$$u^+(x)  = \inf\Big\{t \in \R : \lim_{r\to 0} \frac{\mathcal{L}^N(\{u>t\}\cap B(x,r))}{r^N}=0\Big\}\geq a_j^n - R_j^n $$
and similarly $u^-(x)\leq a_i^n + R_i^n$. Since $u^+(x)> u^-(x)$, $x\in J_u$ as desired.}
 As the analogous {subset} relations hold for each vanishing set, we use the above relations with \eqref{eqn:gapLength} to find
\begin{equation}\label{eqn:partitionOutsideCrack}
\mathcal{H}^{N-1}\Big( \Big[ \Big(\bigcup_{j} \partial^* P_{j}^n\Big)\cup \Big(\bigcup_{j} \partial^* V_{j}^n\Big) \Big]\setminus J_{u_n}\Big) \leq 
\mathcal{H}^{N-1}\Big(\bigcup_{j,\pm}\partial^* G_{j,\pm}^n\Big)\leq C\eps.
\end{equation}
In contrast to the gap sets, it is possible that the length of the boundary of the vanishing region is large. However, Lemma \ref{lem:vanishing} shows that they vanish in measure: Taking $\Omega_n : = \cup_j V_j^n$, \eqref{eqn:partitionOutsideCrack} implies $\mathcal{H}^{N-1}(\partial^*_{\R^{N}}\Omega_n)\leq \mathcal{H}^{N-1}(J_{u_n} \cup \partial \Omega') + C\eps.$ For $B(a,R) \subset \R$, we have 
\[\limsup_{n\to \infty} \int_{B(a,R)}f_n(t,\Omega_n)\, dt \leq  \limsup_{n\to \infty}\int_{B(a,R)}{f_n^J}(t) \, dt +C\eps\leq C\eps,\]  
by \eqref{eqn:weaklyVanishing}, \eqref{eqn:Vj}, and \eqref{eqn:partitionOutsideCrack} (see also Remark \ref{rmk:fnSetoFinitePerim}).
It follows that the hypotheses of Lemma \ref{lem:vanishing} are satisfied, and we conclude that 
\begin{equation}\label{eqn:VnsetsVolume}
\mathcal{L}^N (\cup_j V_j^n)\leq C\eps^{\frac{1}{N-1}} \quad \text{ for sufficiently large $n$},
\end{equation} where $C>0$ depends only on $\Omega'$ and the energy bound~\eqref{eqn:energyBound}.

 {We summarize that we have now shown that $\Omega'$ can be divided into the main partition pieces $P_j^n$, where $u_n$ is close $a_j^n$, and a remaining region that vanishes according to \eqref{eqn:gapVolume} and \eqref{eqn:VnsetsVolume}.}

\textit{Step 2 (Passing to the limit).}
To pass to the limit, we introduce a sequence $\eps_k\downarrow 0$ as $k\to \infty$. By an abuse of notation, we just denote this sequence by $\eps$. We apply Step 1 to the sequence $u_n$ for each fixed $\eps$. We will use the ($\eps$-dependent) partition to construct modified functions $u_{\eps,n}$, similar to those in the theorem statement. After passing $n\to \infty$, we will let $\eps \to 0$ and use a diagonalization argument to conclude.

\textit{Substep 2.1 ($n\to \infty$).}
Now with $\eps>0$ fixed, we will pass to the limit as $n \to \infty$ using classical compactness results for $SBV^p({\Omega'})$ functions. We first {apply Step 1 (with $\eps>0$ fixed) to} define the Caccioppoli partition $\mathcal{P}_\eps^n : = \{P_j^n\}_{j=1}^J \cup\{V_\eps^n\}$, where $V_\eps^n : = \Omega' \setminus (\cup_{j=1}^J P_j^n)$ is the collection of all gap and vanishing sets. By \eqref{eqn:gapVolume} and \eqref{eqn:VnsetsVolume}, 
\begin{equation}\label{eqn:VepssetsVolume}
\mathcal{L}^N (V_\eps^n)\leq C\eps^{\frac{1}{N-1}} \quad \text{ for sufficiently large $n$},
\end{equation}
 With this, we define our $\eps$-modification of the sequence $u_n$ as
\begin{equation}\label{eqn:uepsn}
u_{\eps,n}(x) : = u_n(x) - \sum_{j=1}^J a_j^n \chi_{P_j^n}(x) + (0-u_n(x))\chi_{V_\eps^n}(x) = \begin{cases}
u_n(x)  - a_j^n & \text{ for }x \in P_j^n, \\
0 & \text{ otherwise}.
\end{cases}
\end{equation}
Since $\|u_{\eps,n}\|_{L^\infty(\Omega')} \leq R_{\eps_0}+2$, $\mathcal{H}^{N-1}(J_{u_{\eps,n}}) \leq \mathcal{H}^{N-1}(J_{u_n})+C\eps$ by \eqref{eqn:partitionOutsideCrack}, and \eqref{eqn:energyBound} holds, we apply compactness for $SBV^p(\Omega')$ functions with $L^\infty$-bounds \cite[Theorems 4.7 and 4.8]{AFP} to find a function $u_\eps \in SBV^p(\Omega')$ so that as $n\to \infty$ it holds that $u_{\eps,n} \to u_\eps $ in measure on $\Omega'$, $\nabla u_{\eps,n} \weakly \nabla u_{\eps}$ in $L^p(\Omega';\R^N)$, and 
\begin{equation}\label{eqn:jumpLSCn}
\mathcal{H}^{N-1}(J_{u_\eps}) \leq \liminf_{n\to \infty} \mathcal{H}^{N-1}(J_{u_{\eps,n}}) \leq \liminf_{n\to \infty} \mathcal{H}^{N-1}(J_{u_{n}}) +C\eps.
\end{equation}
For $\eps$ sufficiently small so that $\mathcal{L}^{N}(V_\eps^n)\leq C\eps^{\frac{1}{N-1}} < \mathcal{L}^{N}(\Omega'\setminus \Omega)$, there must be $j_0\in \{1,\ldots,J\}$ with $\Omega'\setminus \Omega \subset P_{j_0}^n$. The convergence of $u_{\eps,n}$ implies that $a_{j_0}^n$ converges as $n\to \infty$. Thus, without disturbing the convergence of $u_{\eps,n}$ to $u_\eps$, we may take $a_{j_0}^n = 0$ for all $n\in \N$, so that $u_{\eps,n} = u_\eps = 0$ on $\Omega'\setminus \Omega.$

For future use, we also note that by \eqref{eqn:energyBound}, \eqref{eqn:partitionOutsideCrack}, and compactness for sets of finite perimeter, the sets $V_\eps^n$ converges (up to a subsequence not relabeled) to a remaining set $V_\eps$ in $L^1(\Omega')$. In fact, $V_{\eps'} \subset V_{\eps}$ for $\eps' < \eps$ and $\mathcal{L}^N(V_\eps) \leq C\eps^{\frac{1}{N-1}}$, since the analogous relations hold for $V_\eps^n$ with sufficiently large $n$ (we may always assume $R_j^n = R_j^n(\eps) \leq R_j^n(\eps')$).

\textit{Substep 2.2 ($\eps \to 0$).}
Now we pass to the limit as $\eps\to 0$. 
Due to the subset relation for $V_\eps$, the function $u: \Omega'\to \R$ given by
\begin{equation}\nonumber
u(x) : = u_\eps(x) \quad  \text{ if }x\not\in V_\eps
\end{equation}
is well-defined. To see this, given $\eps > \eps' >0$, note that {for} $x \not\in V_{\eps}^n$, $u_{\eps,n}(x) = u_{{\eps'},n} (x)$. Since for a.e. $x \not \in V_\eps$ (up to a subsequence not relabeled), we have $u_{\eps,n}(x) \to u_\eps (x)$ and $u_{\eps',n}(x) \to u_{\eps'} (x)$ and, eventually, $x\not \in V_\eps^n$ for all sufficiently large $n$, we conclude that $u_\eps (x) = u_{\eps'}(x)$ for a.e. $x \not \in V_\eps$. Lastly, $\mathcal{L}^N(V_\eps)\to 0$ as $\eps\to 0$, so that a.e. $x \in \Omega'$ belongs to $(V_\eps)^c$ for sufficiently small $\eps.$ It follows that $u$ is defined for a.e. $x\in \Omega'.$

 We now prove that $u\in GSBV^p(\Omega')$ and as $\eps\to 0$ it holds that $u_{\eps} \to u $ in measure on $\Omega'$, $\nabla u_{\eps} \weakly \nabla u$ in $L^p(\Omega')$, and $\mathcal{H}^{N-1}(J_{u}) \leq \liminf_{\eps\to 0} \mathcal{H}^{N-1}(J_{u_{\eps}})$. The convergence in measure follows from the fact that $\mathcal{L}^N(V_\eps)\to 0$ as $\eps\to 0.$ With this, we note for any compactly supported function $\phi \in C^1_c(\R)$, $\phi\circ u_\eps$ belongs to $SBV^p(\Omega)$ with $\int_{\Omega'} |\nabla (\phi\circ u_\eps)|^p\, dx + \mathcal{H}^{N-1}(J_{\phi\circ u_\eps})$ uniformly bounded. By the compactness \cite[Theorems 4.7 and 4.8]{AFP} applied to $\phi\circ u_\eps$ and the convergence in measure of $u_\eps$, we see that $\phi\circ u \in SBV^p(\Omega')$, so that $u \in GSBV(\Omega')$ (here $p$ is only used to prevent a Cantor part from appearing). By \cite[Remark 4.32]{AFP}, at any point of density one with respect $(V_\eps)^c$, we have that $\nabla u = \nabla u_\eps$. This implies that $\nabla u_\eps$ converges to $\nabla u$ pointwise a.e. in $\Omega'$. Since $\nabla u_\eps$ are uniformly bounded in $L^p(\Omega';\R^N)$, there is $g \in L^p(\Omega';\R^N)$ such that $\nabla u_\eps \weakly g.$ Mazur's lemma shows that weak and pointwise limits must coincide, so that $\nabla u_\eps \weakly \nabla u$ in $L^p(\Omega';\R^N)$. Lower semi-continuity of the jump-set
 \begin{equation}\label{eqn:jumpLSCeps}
 \mathcal{H}^{N-1}(J_{u}) \leq \liminf_{\eps\to 0} \mathcal{H}^{N-1}(J_{u_{\eps}})
 \end{equation}
  can be shown using slicing; this is a standard, albeit technical, idea, so we defer the proof to Appendix~\ref{sec:appendix}.
 
\textit{Substep 2.3 (Diagonalization).} As convergence in measure and weak convergence in bounded subsets of $L^p(\Omega';\R^N)$ are metrizable, a diagonalization argument as $n\to \infty$ and then $\eps \to 0$ concludes the theorem. We point out that lower semi-continuity of the jump-set follows from \eqref{eqn:jumpLSCn} and \eqref{eqn:jumpLSCeps}. Further, $S_0^n$ and $(S_j^n)_{j=1}^\infty$ come from $V_\eps^n$ and $\{P_j^n\}_{j=1,j\neq j_0}^{J(\eps)}$, respectively, with the vanishing of $S^n_0$ a consequence of \eqref{eqn:VepssetsVolume} and the vanishing length of the sets $\mathcal{S}^n$ outside of the jump-set $J_{u_n}$ coming from \eqref{eqn:partitionOutsideCrack}. Lastly, $|a_j^n  - a_i^n|\to \infty$ for $i\neq j$ as $n\to \infty$ is guaranteed by Corollary \ref{cor:bubbleDecomp}.
 \end{proof}
 
 As we show in the following remark, the length of the partition can also be controlled by the crack energy.

 \begin{remark}[Accounting for the partition]
 In Step 2 of the Proof of Theorem \ref{thm:main} we introduced the partition $\mathcal{P}_\eps^n$. By \eqref{eqn:energyBound}, \eqref{eqn:CaccProp}, and \eqref{eqn:partitionOutsideCrack}, the length of the Caccioppoli partition $\mathcal{P}_\eps^n$ is uniformly bounded independent of $\eps$ and $n$, so that we may apply compactness for Caccioppoli partitions \cite[Theorem 4.19]{AFP} to find that $\mathcal{P}^n_\eps$ converges to a new partition $\mathcal{P}_\eps$ as $n\to \infty$. Similarly, the partitions $\mathcal{P}_\eps$ inherit the uniform bound on length from $\mathcal{P}_\eps^n$, so that as before, up to a subsequence, the partitions $\mathcal{P}_\eps$ converge to a limit Caccioppoli partition $\mathcal{P}$ as $\eps\to 0$.
 
 Note that by perturbing the constants $a_j^n$ in the definition of $u_{\eps,n}$ \eqref{eqn:uepsn} by well-chosen constants $\alpha_j^n \in [0,1]$, we can form a new function $\tilde u_{\eps,n }$ such that $\partial^* \mathcal{P}_\eps^n \subset J_{\tilde u_{\eps,n}}$ up to an $\mathcal{H}^{N-1}$-null set ($\partial^* \mathcal{P}$ is shorthand for the union of the reduced boundaries of the sets in the partition $\mathcal{P}$). Likewise we can make sure this relation passes to the limit $\partial^* \mathcal{P}_\eps \subset J_{\tilde u_{\eps}}$ as $n\to \infty$ and the next limit $\partial^* \mathcal{P} \subset J_{\tilde u}$ as $\eps\to 0.$
 Stitching together \eqref{eqn:jumpLSCn} and \eqref{eqn:jumpLSCeps} for the perturbed sequence, we find 
 $ \mathcal{H}^{N-1}(J_{\tilde u})\leq \liminf_{n\to \infty}\mathcal{H}^{N-1}(J_{u_{n}}).$
 But by the nature of the piecewise constant perturbation, $J_{\tilde u} = J_u \cup \partial^*\mathcal{P}$ so that
\[\mathcal{H}^{N-1}(J_{ u}\cup \partial^* \mathcal{P})\leq \liminf_{n\to \infty}\mathcal{H}^{N-1}(J_{u_{n}}).\]
\end{remark}

If one wished, analogous to the compactness \cite[Theorem 1.1]{chambolleCrismaleEquilibrium} of Chambolle and Crismale, it is direct from Theorem \ref{thm:main} and the previous remark to cast our compactness statement in terms of a single limit partition $\mathcal{P} = \{P_j\}_{j=0}^\infty$ and obtain convergence for the sequence $u_n - \sum_{j=1}^\infty a_{j}^n \chi_{P_j}$.
 
 We turn to proving the lemma regarding the vanishing case.

 \begin{proof}[Proof of Lemma \ref{lem:vanishing}]
Let $\eps>0$ be as in the Lemma statement and define $m_n:=\mathcal{L}^N(\Omega_n).$ We divide $\R$ into $\alpha\in \N$ intervals using the points $-\infty := t_0^n < t_1^n <\cdots < t_\alpha^n =:\infty$ such that 
\begin{equation}\label{eqn:betweenVolume}
\mathcal{L}^N (\{x\in \Omega_n : t_{i}^n \leq u_n < t_{i+1}^n  \})\geq \frac{m_n}{\alpha} - C\eps^{\frac{N}{N-1}} \quad \text{ for }i = 0,\ldots, \alpha-1,
\end{equation}
where $C>0$ is a constant independent of $\eps$ and $n$, which is possible since for any fixed $R>0$ and $n$ sufficiently large, we have for a.e. $t\in \R$
\begin{equation}\label{eqn:gapVolume2}
\mathcal{L}^N(\{x\in \Omega_n: t-R \leq  u_n <t+R\})\leq C \mathcal{H}^{N-1}(\partial^*_{\R^N}\{x\in \Omega_n: t-R \leq u_n <t+R\})^{\frac{N}{N-1}} \leq C \eps^{\frac{N}{N-1}},
\end{equation}
where we have used \eqref{eqn:weakVanLemma} and reasoning as before \eqref{eqn:gapLength}. We introduce \emph{gap sets} centered around each $t_i^n$ defined as
\begin{equation}\nonumber
G_i^n : = \{x\in \Omega_n : -R < u_n -t_i^n <R \} \quad  \text{ for }i = 1,\ldots, \alpha-1,
\end{equation}
and use \eqref{eqn:gapVolume2} to find $\mathcal{L}^N (G_i^n) \leq C \eps^{\frac{N}{N-1}};$ note since the left-hand side of \eqref{eqn:betweenVolume} is left-continuous as a function of $t_i^n$, each gap set may be taken to be a set of finite perimeter.
Similarly, we define the \emph{inbetween sets}
\begin{equation}\nonumber
B_{i}^n : = \{x\in \Omega_n : t_{i}^n+R \leq u_n < t_{i+1}^n-R  \} \quad \text{ for }i = 0,\ldots, \alpha-1.
\end{equation}
Since $G_i^n$ has small volume, we have from \eqref{eqn:betweenVolume} that
$\mathcal{L}^N (B_{i}^n)\geq \frac{m_n}{\alpha} - C\eps^{\frac{N}{N-1}}$ for a possibly bigger constant $C>0.$

Analogous to the reasoning for \eqref{eqn:CaccProp} and \eqref{eqn:partitionOutsideCrack}, we note that $G_i^n$ and $B_i^n$ form a Caccioppoli partition of $\Omega_n$, so that 
$$\partial^*_{\R^N} B_i^n \setminus (\partial^*G_i^n \cup \partial^*G_{i+1}^n)\subset J_{u_n}\cup \partial^*_{\R^N} \Omega_n,$$
with a point $x\in \partial^*_{\R^N} B_i^n$ belonging to at most one other set $\partial^*_{\R^N} B_j^n$. With this, we use \eqref{eqn:gapVolume2} for the surface area of $G_i^n$ and the isoperimetric inequality to estimate
\begin{equation}\nonumber
\begin{aligned}
\mathcal{H}^{N-1}(J_{u_n}\cup \partial^*_{\R^N} \Omega_n) & \geq  \frac{1}{2}\sum_{i}\mathcal{H}^{N-1}(\partial^*_{\R^N} B_i^n \setminus (\partial^*G_i^n \cup \partial^*G_{i+1}^n)) \\
& \geq \frac{1}{2}\sum_{i}\mathcal{H}^{N-1}(\partial^*_{\R^N} B_i^n ) - C\alpha \eps \\
& \geq \frac{1}{C}\sum_{i}\left(\frac{m_n}{\alpha} - C\eps^{\frac{N}{N-1}}\right)^{\frac{N-1}{N}} - C\alpha \eps \\
& \geq \frac{1}{C}\sum_i\left(\frac{m_n}{\alpha}\right)^{\frac{N-1}{N}} - C\alpha \eps = \frac{1}{C}m_n^{\frac{N-1}{N}}\alpha^{\frac{1}{N}} - C\alpha \eps,
\end{aligned}
\end{equation}
where $C>0$ is independent of $\eps$ and $n$.
Rearranging, we conclude that 
$$ m_n \leq C\left( \mathcal{H}^{N-1}(J_{u_n}\cup \partial^*_{\R^N} \Omega_n) +  C\alpha \eps\right)^{\frac{N}{N-1}} \alpha^{-\frac{1}{N}}.$$
Choosing $\alpha = \lceil \frac{1}{\eps} \rceil $ concludes the proof of the lemma.
\end{proof}

\appendix

\section{Lower semi-continuity of the jump-set}\label{sec:appendix}

We prove the following lower semi-continuity result for the jump-sets.

\begin{lemma}
Suppose that $u_n \in GSBV^p(\Omega')$ are such that 
\begin{equation}\label{eqn:energyBoundSlice}
\liminf_{n\to \infty} \left(\int_{\Omega'} |\nabla u_n|^p \, dx +\mathcal{H}^{N-1}(J_{u_n})\right) < \infty
\end{equation} and $u_n\to u \in GSBV(\Omega')$ in measure as $n\to \infty$. Then it holds that 
\begin{equation}\nonumber
\mathcal{H}^{N-1}(J_{u})\leq \liminf_{n\to \infty}\mathcal{H}^{N-1}(J_{u_n}).
\end{equation} 
\end{lemma}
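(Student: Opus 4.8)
The plan is to reduce the $GSBV^p$ statement to the scalar $SBV$ case by truncation, and then prove scalar lower semi-continuity by the classical slicing method. First I would handle the vectorial-to-scalar reduction exactly as in the main proof: since $J_u = \bigcup_{i=1}^d J_{u_i}$ and $GSBV^p(\Omega';\R^d) = [GSBV^p(\Omega')]^d$, for any $\eta>0$ one invokes \cite[Lemma A.4]{friedrichSteinkeStinson} to obtain disjoint open Lipschitz sets $\mathcal{U}_i$ with $\mathcal{H}^{N-1}(J_u)-\eta \le \sum_i \mathcal{H}^{N-1}(J_{u_i}\cap\mathcal{U}_i)$, so it suffices to prove the scalar result on an arbitrary open subset of $\Omega'$ and let $\eta\to 0$. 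For the scalar case, since truncations $u_n^M := (-M)\vee u_n \wedge M$ satisfy $u_n^M \to u^M$ in measure, $J_{u^M_n}\subset J_{u_n}$, and $\mathcal{H}^{N-1}(J_{u^M})\to \mathcal{H}^{N-1}(J_u)$ as $M\to\infty$ (by $J_u = \bigcup_M J_{u^M}$ up to $\mathcal{H}^{N-1}$-null sets, using that $u$ is a.e. finite), I may assume $u_n, u \in SBV^p(\Omega')$ with a uniform $L^\infty$ bound.

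Next comes the slicing argument, which is the technical heart. Fix a direction $\xi \in S^{N-1}$ and let $\pi_\xi$ denote the hyperplane orthogonal to $\xi$; for $y \in \pi_\xi$ write $u_n^{y,\xi}(t) := u_n(y+t\xi)$ for the one-dimensional slice. The structure theorem for $SBV$ slices (\cite[Section 3.11]{AFP}) gives that for $\mathcal{H}^{N-1}$-a.e. $y$ the slice $u_n^{y,\xi}$ is in $SBV$ of the corresponding segment, with $J_{u_n^{y,\xi}} = (J_{u_n})^{y,\xi}$, and Fubini/the coarea-type formula yields
\begin{equation}\nonumber
\int_{\pi_\xi} \#\big( J_{u_n}^{y,\xi}\big)\, d\mathcal{H}^{N-1}(y) = \int_{J_{u_n}} |\nu_{u_n}\cdot \xi|\, d\mathcal{H}^{N-1}.
\end{equation}
Since $u_n \to u$ in measure, Fubini gives (along a subsequence) $u_n^{y,\xi}\to u^{y,\xi}$ in $L^1$ for $\mathcal{H}^{N-1}$-a.e. $y$; the one-dimensional lower semi-continuity of the number of jumps under $L^1$ convergence (classical, via $\liminf \#(J_{v_n}) \ge \#(J_v)$ for $v_n\to v$ in $L^1_{loc}$, see \cite[Section 3.2, Remark 3.32 and the one-dimensional theory]{AFP}) combined with Fatou's lemma then gives
\begin{equation}\nonumber
\int_{J_u}|\nu_u\cdot\xi|\, d\mathcal{H}^{N-1} = \int_{\pi_\xi}\#(J_u^{y,\xi})\, d\mathcal{H}^{N-1}(y) \le \liminf_{n\to\infty}\int_{J_{u_n}}|\nu_{u_n}\cdot\xi|\, d\mathcal{H}^{N-1}.
\end{equation}
Finally I would upgrade this directional estimate to the full surface measure: apply the slicing bound for $\xi = e_1,\dots,e_N$, and either use the elementary inequality $\mathcal{H}^{N-1}(J_u) \le \sum_{k=1}^N \int_{J_u}|\nu_u\cdot e_k|\, d\mathcal{H}^{N-1}$ together with $\int_{J_{u_n}}|\nu_{u_n}\cdot e_k| \le \mathcal{H}^{N-1}(J_{u_n})$ (which already gives the result up to a dimensional constant, but not sharply), or — to get the sharp constant $1$ — take a countable dense set of directions $\{\xi_m\}$, use a partition-of-unity/localization so that near each point of the rectifiable set $J_u$ the normal is nearly constant and some $\xi_m$ is nearly aligned with it, and sum the localized directional estimates. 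This localization-to-get-the-sharp-constant step is the main obstacle; it is handled by the standard measure-theoretic argument (cf. the proof of lower semi-continuity of total variation / perimeter via slicing in \cite[Theorem 5.3 and Section 3.3]{AFP}), choosing disjoint Borel pieces of $J_u$ on which $\nu_u$ is within $\delta$ of a fixed $\xi_m$ and letting $\delta\to 0$. Everything else is routine given the structure theorems of \cite{AFP}.
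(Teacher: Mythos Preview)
Your proposal follows essentially the same slicing-and-localization strategy as the paper: reduce to one-dimensional lower semicontinuity on slices, integrate via Fatou to obtain the directional estimate $\int_{J_u}|\nu_u\cdot\xi|\,d\mathcal{H}^{N-1}\le\liminf_n\mathcal{H}^{N-1}(J_{u_n})$, then localize in $\xi$ to recover the sharp constant. The paper differs only in minor packaging: it works directly in $GSBV$ via \cite[Theorem 4.35]{AFP} rather than truncating to $SBV\cap L^\infty$ first, and it proves the one-dimensional step explicitly by a Poincar\'e argument rather than citing it. (The vectorial-to-scalar reduction in your first paragraph is unnecessary here, as the lemma is already stated for scalar $GSBV^p(\Omega')$.)

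One genuine caveat: your one-dimensional input, stated as ``$\liminf\#(J_{v_n})\ge\#(J_v)$ for $v_n\to v$ in $L^1_{\mathrm{loc}}$,'' is false as written --- smooth mollifications of a step function give $\#(J_{v_n})=0$ while $\#(J_v)=1$. You need a uniform bound on $\int|v_n'|^p$ on the slice to rule out absolutely continuous smearing of jumps; this bound is available (Fatou applied to the sliced energy, exactly as the paper does before invoking Step~1), but you must invoke it. With that correction the argument goes through.
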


\begin{proof}
\textit{Step 1 ($N=1$).} We suppose we are in dimension $N=1$ and, without loss of generality, that \eqref{eqn:energyBoundSlice} holds with the $\liminf$ replaced by a limit. Let $x_0\in J_u$. We \emph{claim} that for any $\eta>0$, for all sufficiently large $n$, $J_{u_n}\cap B(x_0,\eta)\neq \emptyset$. If on the contrary, $J_{u_n}\cap B(x_0,\eta)= \emptyset$ for a sequence $n\to \infty$, then we have that $u_n - \dashint_{B(x_0,\eta)} u_n$ is uniformly bounded in $W^{1,p}(B(x_0,\eta))$ by the Poincar\'e inequality and \eqref{eqn:energyBoundSlice}. Applying the Rellich--Kondrachov compactness theorem, it necessarily follows that $u \in W^{1,p}(B(x_0,\eta))$, contradicting that $x_0\in J_u$ and proving the claim. Take a finite subset $\Xi$ of $J_u$ and choose $\eta>0$ sufficiently small so that $B(x_0,\eta)$ and $B(x_1,\eta)$ are disjoint for any $x_0,x_1\in \Xi$. We apply the claim to find
\begin{equation}\nonumber
\mathcal{H}^0(J_u\cap \Xi) = \sum_{x\in \Xi} 1 \leq \liminf_{n\to \infty}\sum_{x\in \Xi} \mathcal{H}^0(J_{u_n}\cap B(x,\eta)) \leq \liminf_{n\to \infty}\mathcal{H}^0(J_{u_n}).
\end{equation}
Taking $\Xi \uparrow J_u$ concludes the one-dimensional case.

\textit{Step 2 ($N\geq 2$).} We provide the main ideas for completing the proof via slicing and refer to \cite{chambolleCrismaleEquilibrium,stinsonWittig} for related estimates via slicing. For a given direction $\xi\in \mathbb{S}^{N-1}$, we can define the orthogonal plane $\Pi_\xi : = \{y\in \R^N : \langle y,\xi \rangle  = 0\}$, and for each $y\in \Pi_\xi$, we can slice any set $\Omega$ into $\Omega_{\xi,y} : = \{t\in \R: y+t\xi \in \Omega\}$. We also define $u_{\xi,y}(t) : = u(y+t\xi)$. By \cite[Theorem 4.35]{AFP}, for $\xi\in \mathbb{S}^{N-1}$ and $\mathcal{H}^{N-1}$-a.e. $y\in \Pi_\xi$, $(u_n)_{\xi,y} \in GSBV^p(\Omega_{\xi,y}')$ and  $J_{(u_n)_{\xi,y}} = (J_{u_n})_{\xi,u},$ with the analogous relations holding for $u$. 

We apply Fubini's theorem and the coarea formula to find
\begin{equation}\nonumber
\int_{\Pi_\xi} \left[\int_{\Omega_{\xi,y}'} | (u_n)_{\xi,y}'|^p \, dt + \mathcal{H}^{0}(J_{(u_n)_{\xi,y}})\right]\, d\mathcal{H}^{N-1}(y) \leq \int_{\Omega'} |\nabla u_n|^p \, dx +\mathcal{H}^{N-1}(J_{u_n}).
\end{equation}
Applying Fatou's lemma, we see that 
$\liminf_{n\to \infty}\int_{\Omega_{\xi,y}'} | (u_n)_{\xi,y}'|^p \, dt + \mathcal{H}^{0}(J_{(u_n)_{\xi,y}}) < \infty$ and, from similar reasoning, that $(u_\eps)_{\xi,y}$ converges to $u_{\xi,y}$ in measure for $\mathcal{H}^{N-1}$-a.e. $y\in \Pi_\xi$. Thus, by Step 1 for $\mathcal{H}^{N-1}$-a.e. $y\in \Pi_\xi$, we have that 
\begin{equation}\nonumber
\mathcal{H}^{0}(J_{u_{\xi,y}}) \leq \liminf_{n\to \infty} \mathcal{H}^{0}(J_{(u_n)_{\xi,y}}).
\end{equation}
Integrating this relation over $y$, using $J_{(u_n)_{\xi,y}} = (J_{u_n})_{\xi,u}$, Fatou's lemma, and the coarea formula twice, we find
\begin{equation}\nonumber
\int_{J_u}|\langle \nu(x) ,\xi \rangle | \, d \mathcal{H}^{N-1}(x) = \int_{\Pi_\xi} \mathcal{H}^{0}((J_{u})_{\xi,y}) \, d\mathcal{H}^{N-1}(y) \leq \liminf_{n\to \infty} \mathcal{H}^{N-1}(J_{u_n})
\end{equation}
where $\nu(x)$ is the measure-theoretic normal for the surface $J_u$ at $x.$
We proved this inequality on all of $\Omega'$, but the argument is valid on any open subset $\mathcal{U}\subset \Omega'$ so that
 \begin{equation}\label{eqn:locLSCjump}
\int_{J_u\cap \mathcal{U}}|\langle \nu(x) ,\xi \rangle | \, d \mathcal{H}^{N-1}(x) \leq \liminf_{n\to \infty} \mathcal{H}^{N-1}(J_{u_n}\cap \mathcal{U}).
\end{equation}
A classical localization argument for measures (see \cite[Theorem 1.16 and Section 4.1]{braidesFreeDiscont}) allows one to locally optimize $\xi\in \mathbb{S}^{N-1}$ in \eqref{eqn:locLSCjump} to recover $\mathcal{H}^{N-1}(J_{u})\leq \liminf_{n\to \infty}\mathcal{H}^{N-1}(J_{u_n})$, as desired.
\end{proof}


\section*{Acknowledgements}

W.F. was supported by the NSF grant DMS-2407235.  K.S. thanks Manuel Friedrich for (the always) insightful comments and pointing to the reference \cite{DMFraToa_Preprint}. K.S. was also supported by funding from the NSF (USA) RTG grant DMS-2136198.


\bibliographystyle{amsplain}
\bibliography{./FS-compactness-arxiv}

\providecommand{\bysame}{\leavevmode\hbox to3em{\hrulefill}\thinspace}
\providecommand{\MR}{\relax\ifhmode\unskip\space\fi MR }
\providecommand{\MRhref}[2]{%
  \href{http://www.ams.org/mathscinet-getitem?mr=#1}{#2}
}
\providecommand{\href}[2]{#2}
\begin{thebibliography}{10}

\bibitem{AlmiTasso21}
S.~Almi and E.~Tasso, \emph{A new proof of compactness in {$G(S)BD$}}, Adv.
  Calc. Var. \textbf{16} (2021), 637--650.

\bibitem{ambrosioNewClass90}
L.~Ambrosio, \emph{Existence theory for a new class of variational problems},
  Arch. Rational Mech. Anal. \textbf{111} (1990), 291--322.

\bibitem{ambrosioLSCquasi94}
\bysame, \emph{On the lower semicontinuity of quasiconvex integrals in
  $sbv(\omega, \mathbb{R}^k)$}, Nonlinear Anal. \textbf{23} (1994), no.~3,
  405--425.

\bibitem{AFP}
L.~Ambrosio, N.~Fusco, and D.~Pallara, \emph{Functions of bounded variation and
  free discontinuity problems}, Oxford Mathematical Monographs, The Clarendon
  Press, Oxford University Press, New York, 2000.

\bibitem{braidesFreeDiscont}
A.~Braides, \emph{Approximation of free discontinuity problems}, Springer,
  1998.

\bibitem{chambolleCrismale18}
A.~Chambolle and V.~Crismale, \emph{Compactness and lower semicontinuity in
  {GSBD}}, J. Eur. Math. Soc. \textbf{23} (2018), no.~3, 701--719.

\bibitem{chambolleCrismaleEquilibrium}
A.~Chambolle and V.~Crismale, \emph{Equilibrium configurations for
  nonhomogeneous linearly elastic materials with surface discontinuities}, Ann.
  Sc. Norm. Super. Pisa Cl. Sci. \textbf{XXIV} (2023), no.~5, 1575--1610.

\bibitem{DMFraToa_Preprint}
G.~{Dal Maso}, {G. A.} Francfort, and R.~Toader, \emph{Quasistatic crack growth
  in finite elasticity}, arXiv preprint (2004),
  \href{https://arxiv.org/abs/0401196}{arxiv:0401196}.

\bibitem{DMFraToa05}
\bysame, \emph{Quasistatic crack growth in nonlinear elasticity}, Arch.
  Rational Mech. Anal. \textbf{176} (2005), 165--225.

\bibitem{dalMasoToader_22}
G.~{Dal Maso} and R.~Toader, \emph{A new space of generalised functions with
  bounded variation motivated by fracture mechanics}, Nonlinear Differ. Equ.
  Appl. \textbf{29} (2022), no.~63, 1--36.

\bibitem{donati_GBVstar_24}
D.~Donati, \emph{A new space of generalised vector-valued functions of bounded
  variation}, arXiv preprint (2024),
  \href{https://arxiv.org/abs/2310.11538}{arxiv:2310.11538}.

\bibitem{francfortMarigo98}
{G. A.} Francfort and {J. J.} Marigo, \emph{Revisiting brittle fracture as an
  energy minimization problem}, J. Mech. Phys. Solids \textbf{46} (1998),
  1319--1342.

\bibitem{friedrich_GSBVp}
M.~Friedrich, \emph{A compactness result in {$GSBV^p$} and applications to
  {$\Gamma$}-convergence for free discontinuity problems}, Calc. Var. Partial
  Differential Equations \textbf{58} (2019), no.~86.

\bibitem{FriedrichSolombrino18}
M.~Friedrich and F.~Solombrino, \emph{Quasistatic crack growth in
  {$2d$}-linearized elasticity}, Ann. Inst. H. Poincar{\'e} Anal. Non
  Lin{\'e}aire \textbf{35} (2018), 27--64.

\bibitem{friedrichSteinkeStinson}
M.~Friedrich, P.~Steinke, and K.~Stinson, \emph{Linearization of quasistatic
  fracture evolution in brittle materials}, arXiv preprint (2024),
  \href{https://arxiv.org/abs/2411.13446}{arxiv:2411.13446}.

\bibitem{lions_CC_84}
P.L. Lions, \emph{The concentration-compactness principle in the calculus of
  variations. the locally compact case, part 1}, Ann. Inst. H. Poincar{\'e}
  Anal. Non Lin{\'e}aire \textbf{1} (1984), no.~2, 109--145.

\bibitem{stinsonWittig}
K.~Stinson and S.~Wittig, \emph{An elliptic approximation for phase separation
  in a fractured material}, arXiv preprint (2024),
  \href{https://arxiv.org/abs/2408.03776}{arxiv:2408.03776}.

\end{thebibliography}

\end{document}